\date{}
\newtheorem{theorem}{Theorem} \newtheorem{lemma}[theorem]{Lemma}
\newtheorem{corollary}[theorem]{Corollary}
\newtheorem{proposition}[theorem]{Proposition}
\newtheorem{remark}[theorem]{Remark}
\newcommand{\norm}[1]{\left\Vert#1\right\Vert}
\newcommand{\abs}[1]{\left\vert#1\right\vert}
\newcommand{\set}[1]{\left\{#1\right\}}
\newcommand{\brac}[1]{\left(#1\right)}
\newcommand{\scalar}[1]{\left \langle #1 \right \rangle}
\newcommand{\Real}{\mathbb{R}}
\renewcommand{\H}{\mathcal{H}}
\newcommand{\Var}{\mathbb{V}ar}
\newcommand{\eps}{\epsilon}
\newcommand{\II}{\text{II}}
\def\XXint#1#2#3{{\setbox0=\hbox{$#1{#2#3}{\int}$}
\vcenter{\hbox{$#2#3$}}\kern-.5\wd0}}
\begin{document}

\title{Remarks on the KLS conjecture and Hardy-type inequalities}
\author{Alexander V. Kolesnikov\thanks{Faculty of Mathematics, National Research University Higher School of Economics, Moscow, Russia. Email: sascha77@mail.ru}, $\ $
Emanuel Milman\thanks{Department of Mathematics, Technion - Israel
Institute of Technology, Haifa 32000, Israel. Email: emilman@tx.technion.ac.il.}}
\date{}
\maketitle

\begin{abstract}
We generalize the classical Hardy and Faber-Krahn inequalities to arbitrary functions on a convex body $\Omega \subset \Real^n$, not necessarily vanishing on the boundary $\partial \Omega$. This reduces the study of the Neumann Poincar\'e constant on $\Omega$ to that of the cone and Lebesgue measures on $\partial \Omega$; these may be bounded via the curvature of $\partial \Omega$. A second reduction is obtained to the class of harmonic functions on $\Omega$. We also study the relation between the Poincar\'e constant of a log-concave measure $\mu$ and its associated K. Ball body $K_\mu$. In particular, we obtain a simple proof of a conjecture of Kannan--Lov\'asz--Simonovits for unit-balls of $\ell^n_p$, originally due to Sodin and Lata{\l}a--Wojtaszczyk. 
\end{abstract}

\section{Introduction}

Given a compact connected set $\Omega$ with non-empty interior in Euclidean space $(\Real^n,\abs{\cdot})$ ($n \geq 2$) and a smooth function $f$  on $\Omega$ vanishing on $\partial \Omega$, a version of the classical Hardy inequality (e.g. \cite{GM}) states that:
\begin{equation} \label{eq:Hardy0}
\int_{\Omega} f^2 dx \le \frac{4}{n^2} \inf_{x_0 \in \Real^n} \int_{\Omega} |x - x_0|^2 |\nabla f|^2 dx .
\end{equation}
The classical Faber--Krahn inequality (e.g. \cite{Benguria-Lambda1Survey}) states that under the same conditions:
\begin{equation} \label{eq:FK}
\int_{\Omega} f^2 dx \leq P^D_{\Omega^*} \int_{\Omega} |\nabla f|^2 dx ,
\end{equation}
where $P^D_{\Omega^*}$ is the best constant in the above inequality under the same conditions with $\Omega = \Omega^*$, the Euclidean Ball having the same volume as $\Omega$. $P^D$ is called the Poincar\'e constant with \emph{zero Dirichlet boundary conditions}; it is elementary to verify that $P^D_{\Omega^*} \simeq \frac{1}{n} \abs{\Omega^*}^{2/n}$ (see Remark \ref{rem:Watson} for more precise information). 

In this note we explore what may be said when $f$ does not necessarily vanish on the boundary, and develop applications for estimating the Poincar\'e constant with Neumann boundary conditions. Here and elsewhere, we use $\abs{M}$ to denote the $k$-dimensional Hausdorff measure $\H^{k}$ of the $k$-dimensional manifold $M$, and $A \simeq B$ to denote that $c \leq A/B \leq C$, for some universal numeric constants $c , C>0$. All constants $c,c',C,C',C_1,C_2$, etc. appearing in this work are positive and universal, i.e. do not depend on $\Omega$, $n$ or any other parameter, and their value may change from one occurrence to the next.

Let $\lambda_{\Omega}$ denote the uniform (Lebesgue) probability measure on $\Omega$, and let $P^N_{\Omega}$ denote the Poincar\'e constant of $\Omega$, i.e. the best constant satisfying:
\begin{equation} \label{eq:SG}
\Var_{\lambda_{\Omega}} f \le P^N_{\Omega} \int_{\Omega} |\nabla f|^2 d \lambda_{\Omega} \;\;\; \forall \text{ smooth } f : \Omega \rightarrow \Real ~,
\end{equation}
without assuming any boundary conditions on $f$. 
Here and throughout $\Var_\mu(f) := \int f^2 d\mu - (\int f d\mu)^2$ for any probability measure $\mu$.  
It is well-known that when $\partial \Omega$ is smooth, $1/P^N_{\Omega}$ coincides with the first non-zero eigenvalue (``spectral-gap") of the Laplacian on $\Omega$ with zero \emph{Neumann} boundary conditions, explaining the superscript \emph{$N$} in our notation for $P^N_\Omega$.  The classical Szeg\"{o}--Weinberger inequality (e.g. \cite{Benguria-Lambda1Survey}) states that $P^N_{\Omega} \geq P^N_{\Omega^*} \simeq Vol(\Omega^*)^{2/n}$. By inspecting domains with very narrow bottlenecks, or even convex domains which are very narrow and elongated in a certain direction, it is clear that without some additional information on $\Omega$, $P^N_{\Omega}$ is not bounded from above. However, a conjecture of Kannan--Lov\'asz--Simonovits \cite{KLS} asserts that on a \emph{convex} domain $\Omega$, the Poincar\'e inequality (\ref{eq:SG}) will be saturated by linear functions $f$, up to a universal constant $C>0$ independent of $n$ and $\Omega$, i.e.:
\[
P^N_{\Omega} \leq C P^{Lin}_{\Omega} ~,~ P^{Lin}_{\Omega} := \sup_{\theta \in S^{n-1}} \Var_{\lambda_{\Omega}} \scalar{\cdot,\theta} 
\]
It is easy to reduce the KLS conjecture to the case that $\Omega$ is \emph{isotropic}, meaning that its barycenter is at the origin and the variance of all unit linear functionals is $1$, i.e.:
\[
\int x_i d\lambda_{\Omega} =0 ~,~ \int x_i x_j d \lambda_{\Omega} =\delta_{ij} \;\;\;\; \forall i,j=1,\ldots,n .
\]
The conjecture then asserts that $P^N_{\Omega} \leq C$ for any convex isotropic domain $\Omega$ in $\Real^n$. 

\medskip

Given a Borel probability measure $\mu$ on $\Real^n$ (not necessarily absolutely continuous), we denote by $P^\infty_\mu$ the best constant in the following weak $L^2$-$L^\infty$ Poincar\'e inequality:
\begin{equation} \label{eq:weak-Poincare}
\Var_{\mu} f \le P^\infty_{\mu}  \norm{|\nabla f(x)|}^2_{L^\infty(\mu)} \;\;\; \forall \text{ smooth } f : \Real^n \rightarrow \Real ~.
\end{equation}
Set $P^\infty_{\Omega} := P^\infty_{\lambda_{\Omega}}$; clearly $ P^\infty_{\Omega} \leq P^N_{\Omega}$. 
In \cite{EMilman-RoleOfConvexity}, the second-named author showed that when $\Omega$ is convex, the latter inequality may be reversed:
\begin{equation} \label{eq:equiv}
 P^N_{\Omega} \leq C P^\infty_{\Omega} ~,
\end{equation}
where $C>1$ is a universal numeric constant. This reduces the KLS conjecture to the class of $1$-Lipschitz functions $f$ (satisfying $\norm{\abs{\nabla f}}_{L^\infty} \leq 1$). Another remarkable reduction was obtained by R. Eldan, who showed \cite{Eldan-StochasticLocalization} that it is essentially enough (up to logarithmic factors in $n$) to establish the conjecture for the Euclidean norm function $f(x) = \abs{x}$, but simultaneously for \emph{all} isotropic convex domains in $\Real^n$. Employing an estimate on the variance of $\abs{x}$ due to O. Gu\'edon and the second-named author \cite{GuedonEMilmanInterpolating}, it follows from Eldan's reduction that for a general convex body in $\Real^n$, $P^N_{\Omega} \leq C n^{2/3} \log(1+n) P^{Lin}_{\Omega}$. 
 
In this work, we obtain several additional reductions of the KLS conjecture. First, we obtain a sufficient condition by reducing to the study of $P^\infty_{\sigma_{\partial \Omega}}$ and $P^\infty_{\lambda_{\partial \Omega}}$, the cone and Lebesgue measures on $\partial \Omega$, respectively. In particular, it suffices to bound the variance of homogeneous functions which are $1$-Lipschitz on the boundary. This is achieved by obtaining Neumann versions of the Hardy and Faber-Krahn inequalities (\ref{eq:Hardy0}) and (\ref{eq:FK}) for general functions (not necessarily vanishing on the boundary). The parameters $P^\infty_{\sigma_{\partial \Omega}}$ or $P^\infty_{\lambda_{\partial \Omega}}$ may then be bounded using a result from our previous work \cite{KolesnikovEMilman-Reilly}, by averaging certain curvatures on $\partial \Omega$ (see Theorem \ref{thm:KLS-MainBound}). 

Second, we reduce the KLS conjecture to the class of harmonic functions. Thirdly, we consider the Poincar\'e constant of an unconditional convex body bounded by the principle hyperplanes, when a certain mixed Dirichlet--Neumann boundary condition is imposed. It is interesting to check which of the boundary conditions will dominate this Poincar\'e constant, and we determine that it is the Dirichlet ones, resulting in a Faber--Krahn / Hardy-type upper bound. 

Lastly, we reveal a general relation between the Poincar\'e constant of a log-concave measure $\mu$ and its associated K. Ball body $K_\mu$, assuming that the latter has finite-volume-ratio. In particular, we obtain a quick proof of the KLS conjecture for unit-balls of $\ell_p^n$, $p \in [1,2]$ (first established by S. Sodin \cite{SodinLpIsoperimetry}), which avoids using the concentration estimates of Schechtman and Zinn \cite{SchechtmanZinn2}. This is also extended to arbitrary $p \geq 2$ bounded away from $\infty$.

Our proofs follow classical arguments for establishing the Hardy inequality, which can be viewed as a Lyapunov function or vector-field method, in which one is searching for a vector-field whose magnitude is bounded from above on one hand, and whose divergence is bounded from below on the other. For more applications of Lyapunov functions  to the study of Sobolev-type inequalities, see \cite{CattiauxGuillin-LyapunovSurvey}.

\medskip
\emph{Acknowledgements.} We would like to thank Bo'az Klartag for his interest and fruitfull discussions.
The first-named author was supported by RFBR project 12-01-33009 and  the DFG project  CRC 701.
This study (research grant No 14-01-0056) was supported by The National Research University–-Higher School of Economics' Academic Fund Program in 2014/2015. The second-named author was supported by ISF (grant no. 900/10), BSF (grant no. 2010288), Marie-Curie Actions (grant no. PCIG10-GA-2011-304066) and the E. and J. Bishop Research Fund. 

\section{Hardy-type inequalities}

Let  $\Omega$ denote a compact connected set  in $\mathbb{R}^n$ with smooth boundary and having the origin in its interior. We denote by $\nu$ the unit exterior normal-field to $\partial \Omega$.
We denote by $\lambda_{\partial \Omega}$ the uniform probability measure on $\partial \Omega$ induced by the Lebesgue measure, i.e. $\H^{n-1}|_{\partial \Omega} / \abs{\partial \Omega}$. 
Our basic starting point is the following integration-by-parts formula. Let $g$ denote a smooth function and $\xi$ a smooth vector field on $\Omega$. Then:
\begin{equation} \label{eq:ByParts}
\int_{\Omega} div(\xi) g dx = - \int_{\Omega} \scalar{\xi , \nabla g} dx + \int_{\partial \Omega} \scalar{\xi,\nu} g \; d\H^{n-1} ~.
\end{equation}
Applying this to $g = f^2$ and using the Cauchy-Schwartz inequality (in additive form), we obtain for any positive function $\lambda$ on $\Omega$:
\[
\int_{\Omega} div(\xi) f^2 dx \leq \int_\Omega  \lambda f^2 dx + \int_{\Omega} \frac{1}{\lambda}\abs{\scalar{\xi,\nabla f}}^2 dx + \int_{\partial \Omega} \scalar{\xi,\nu} f^2 \; d\H^{n-1} ,
\]
or equivalently:
\begin{equation} \label{eq:BasicHardy}
\int_{\Omega} \brac{div(\xi) - \lambda} f^2 dx \leq  \int_{\Omega} \frac{1}{\lambda} \abs{\scalar{\xi,\nabla f}}^2 dx + \int_{\partial \Omega} \scalar{\xi,\nu} f^2 \; d\H^{n-1} .
\end{equation}

Let us apply this to several different vector fields $\xi$.

\subsection{Radial Vector Field}

In this subsection, assume in addition that $\Omega$ is star-shaped, meaning that $\Omega = \set{x \; ; \; \norm{x} \leq 1}$, where $\norm{x} := \inf \set{ \lambda > 0 ; x \in \lambda \Omega}$ denotes its associated gauge function. We denote by $\sigma_{\partial \Omega}$ the induced cone probability measure on $\partial \Omega$, i.e. the push-forward of $\lambda_{\Omega}$ via the map $x \mapsto \frac{x}{\norm{x}}$.
It is well-known and immediate to check that:
\[
\sigma_{\partial \Omega} = \frac{1}{\abs{\Omega}} \frac{\langle x, \nu \rangle}{n} \cdot \mathcal{H}^{n-1}|_{\partial \Omega} .
\]

\begin{theorem}[Hardy with Boundary] \label{thm:Hardy1}
 Let $f$ denote a smooth function on $\Omega$. Then:
\begin{equation} \label{eq:Hardy}
\Var_{\lambda_{\Omega}} f \le \frac{4}{n^2} \int_{\Omega} \langle x, \nabla f \rangle^2  d \lambda_{\Omega}
+ 2 \Var_{\sigma_{\partial \Omega}} f .
\end{equation}
\end{theorem}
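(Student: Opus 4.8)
The plan is to feed the Euler (radial) vector field $\xi(x) = x$ into the basic inequality \eqref{eq:BasicHardy}. Since $div(\xi) = n$, choosing the weight $\lambda$ to be the constant $n/2$ turns \eqref{eq:BasicHardy} into
\[
\frac{n}{2} \int_\Omega f^2 \, dx \le \frac{2}{n} \int_\Omega \scalar{x,\nabla f}^2 \, dx + \int_{\partial\Omega} \scalar{x,\nu} f^2 \, d\H^{n-1} .
\]
(The value $n/2$ is forced: writing $\lambda \equiv \alpha$ gives the factor $\frac{1}{\alpha(n-\alpha)}$ in front of the gradient term, which is minimized at $\alpha = n/2$, yielding exactly $4/n^2$; the same choice produces the clean constant $2$ on the boundary term below.) Dividing through by $\frac{n}{2}\abs{\Omega}$ and invoking the stated identity $\sigma_{\partial\Omega} = \frac{1}{\abs{\Omega}} \frac{\scalar{x,\nu}}{n} \cdot \H^{n-1}|_{\partial\Omega}$, the last integral becomes $n \int_{\partial\Omega} f^2 \, d\sigma_{\partial\Omega}$, and we arrive at the ``$L^2$ version''
\[
\int_\Omega f^2 \, d\lambda_\Omega \le \frac{4}{n^2} \int_\Omega \scalar{x,\nabla f}^2 \, d\lambda_\Omega + 2 \int_{\partial\Omega} f^2 \, d\sigma_{\partial\Omega} .
\]

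To upgrade this to the variance form, I would exploit that the inequality is invariant under adding a constant to $f$ \emph{on the left} while leaving the gradient term untouched: for any $c \in \Real$ one has $\nabla(f - c) = \nabla f$, so applying the displayed $L^2$ inequality to $f - c$ in place of $f$ gives
\[
\int_\Omega (f-c)^2 \, d\lambda_\Omega \le \frac{4}{n^2} \int_\Omega \scalar{x,\nabla f}^2 \, d\lambda_\Omega + 2 \int_{\partial\Omega} (f-c)^2 \, d\sigma_{\partial\Omega} .
\]
Now specialize to $c = \int_{\partial\Omega} f \, d\sigma_{\partial\Omega}$, so that the boundary term equals $2\Var_{\sigma_{\partial\Omega}} f$, and bound the left-hand side from below by $\Var_{\lambda_\Omega} f = \inf_{c'} \int_\Omega (f-c')^2 \, d\lambda_\Omega \le \int_\Omega (f-c)^2 \, d\lambda_\Omega$. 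This yields \eqref{eq:Hardy} exactly.

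There is no serious obstacle here; the argument is essentially the classical Hardy computation with the boundary term retained rather than discarded. The only points requiring a little care are the normalization bookkeeping between $\H^{n-1}|_{\partial\Omega}$, the probability measure $\lambda_\Omega$, and the cone measure $\sigma_{\partial\Omega}$ (handled by the quoted identity for $\sigma_{\partial\Omega}$), and the observation — which is what makes the ``variance'' formulation possible at all — that replacing $f$ by $f-c$ leaves $\scalar{x,\nabla f}^2$ unchanged, so that one may independently center $f$ against the cone measure on the boundary and against $\lambda_\Omega$ in the interior.
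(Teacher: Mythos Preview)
Your proof is correct and follows essentially the same approach as the paper: apply \eqref{eq:BasicHardy} with the radial field $\xi(x)=x$ and $\lambda\equiv n/2$, normalize to probability measures, and then replace $f$ by $f-\int_{\partial\Omega}f\,d\sigma_{\partial\Omega}$ to pass from the $L^2$ inequality to the variance form. Your parenthetical remark optimizing $\lambda$ is a nice addition not made explicit in the paper.
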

\begin{proof}
Apply (\ref{eq:BasicHardy}) with $\xi(x) = x$, so that $div(\xi) = n$, and $\lambda \equiv n/2$. We obtain:
\begin{equation} \label{eq:HardyDomain}
\int_{\Omega} f^2 dx \le \frac{4}{n^2} \int_{\Omega} \langle x, \nabla f \rangle^2  dx
+ \frac{2}{n}\int_{\partial \Omega} \langle x, \nu \rangle f^2 d \mathcal{H}^{n-1}(x) .
\end{equation}
In particular, we see that (\ref{eq:Hardy}) immediately follows when $f$ vanishes on $\partial \Omega$. For general functions, we
divide (\ref{eq:HardyDomain}) by $Vol(\Omega)$ and apply the resulting inequality to $f - a$ with $a :=\int_{\partial \Omega} f d \sigma_{\partial \Omega}$:
\[
\Var_{\lambda_{\Omega}} f \le \int (f - a)^2 d\lambda_{\Omega} \leq \frac{4}{n^2} \int_{\Omega} \langle x, \nabla f \rangle^2  d \lambda_{\Omega}
+ 2 \Var_{\sigma_{\partial \Omega}} f|_{\partial \Omega}.
\]
This is the desired assertion. 
\end{proof}

\subsection{Optimal Transport to Euclidean Ball}

A remarkable theorem of Y. Brenier \cite{BrenierMap} asserts that between any two absolutely continuous probability measures $\mu,\eta$ on $\Real^n$ (say having second moments), there exists a unique ($\mu$ a.e.) map $T$ which minimizes the transport-cost $\int \abs{T(x) - x}^2 d\mu(x)$, among all maps pushing forward $\mu$ onto $\eta$; moreover, this optimal transport map $T$ is characterized as being the gradient of a convex function $\varphi$. See also \cite{McCannOTOnManifolds} for refinements and extensions. The regularity properties of $T$ have been studied by Caffarelli \cite{CaffarelliHigherHolderRegularity,CaffarelliRegularity,CaffarelliBoundaryRegularity}, who discovered that a necessary condition for $T$ to be smooth is that $\eta$ have convex support; in particular, Caffarelli's results imply that when $\mu = \lambda_{\Omega}$, $\eta = \lambda_{B_2^n}$ and $\partial \Omega$ is smooth, then so is the Brenier map $T_0 := \nabla \varphi_0$ pushing forward $\mu$ onto $\nu$, on the entire closed $\Omega$ (i.e. all the way up to the boundary). By the change-of-variables formula, we obviously have:
\[
\text{Jac} \; T_0 = \text{det} \; dT_0 = \frac{\abs{B_2^n}}{\abs{\Omega}} ~.
\]

\begin{theorem}[Faber--Krahn with Boundary] \label{thm:Hardy2}
 Let $f$ denote a smooth function on $\Omega$. Then:
\begin{equation} \label{eq:Hardy2}
\Var_{\lambda_{\Omega}} f \leq  \frac{4 \abs{\Omega}^{2/n}}{n^2 \abs{B_2^n}^{2/n}} \int_{\Omega} \abs{\nabla f}^2 d \lambda_{\Omega} + \frac{2 \abs{\partial \Omega}}{n \abs{B_2^n}^{1/n} \abs{\Omega}^{(n-1)/n}} \Var_{\lambda_{\partial \Omega}} f|_{\partial \Omega}.
\end{equation}
\end{theorem}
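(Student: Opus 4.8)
The plan is to run the argument of Theorem~\ref{thm:Hardy1} almost verbatim, but replacing the radial field $\xi(x)=x$ by the Brenier map $\xi = T_0$ transporting $\lambda_{\Omega}$ onto $\lambda_{B_2^n}$, introduced just above. Two features of $T_0$ make it an efficient competitor in (\ref{eq:BasicHardy}). First, its values lie in $\overline{B_2^n}$, so $\abs{T_0}\le 1$ on all of $\overline{\Omega}$; by the Caffarelli boundary regularity cited before the statement, $T_0$ is smooth up to $\partial\Omega$, so this bound persists on the boundary and the integration-by-parts identity (\ref{eq:ByParts}) underlying (\ref{eq:BasicHardy}) applies. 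Second, $T_0 = \nabla\varphi_0$ with $\varphi_0$ convex, so $dT_0 = \nabla^2\varphi_0$ is symmetric and positive semi-definite; applying the arithmetic--geometric mean inequality to its nonnegative eigenvalues and recalling $\det dT_0 = \abs{B_2^n}/\abs{\Omega}$ gives
\[
div(T_0) \;=\; \mathrm{tr}\,(dT_0) \;\ge\; n\,(\det dT_0)^{1/n} \;=\; n\brac{\tfrac{\abs{B_2^n}}{\abs{\Omega}}}^{1/n} \;=:\; n\beta ~.
\]

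First I would apply (\ref{eq:BasicHardy}) with $\xi = T_0$ and the constant weight $\lambda\equiv n\beta/2$. Since $div(T_0)-\lambda \ge n\beta/2$ pointwise and $f^2\ge 0$, the left-hand side is at least $\tfrac{n\beta}{2}\int_{\Omega} f^2\,dx$; on the right-hand side I bound $\abs{\scalar{T_0,\nabla f}}^2 \le \abs{T_0}^2\abs{\nabla f}^2 \le \abs{\nabla f}^2$ and $\scalar{T_0,\nu}\le \abs{T_0}\abs{\nu}\le 1$ on $\partial\Omega$, yielding
\[
\frac{n\beta}{2}\int_{\Omega} f^2\,dx \;\le\; \frac{2}{n\beta}\int_{\Omega} \abs{\nabla f}^2\,dx \;+\; \int_{\partial\Omega} f^2\,d\H^{n-1} ~.
\]
Dividing by $\tfrac{n\beta}{2}\abs{\Omega}$, substituting $\beta = (\abs{B_2^n}/\abs{\Omega})^{1/n}$, and writing $\H^{n-1}|_{\partial\Omega} = \abs{\partial\Omega}\,\lambda_{\partial\Omega}$ produces exactly (\ref{eq:Hardy2}) with $\int f^2$ in place of the two variances.

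Finally, exactly as in the proof of Theorem~\ref{thm:Hardy1}, I pass from $f^2$ to the variances by applying the inequality just obtained to $f-a$ with $a := \int_{\partial\Omega} f\,d\lambda_{\partial\Omega}$: the gradient term is unchanged, the boundary integral becomes $\Var_{\lambda_{\partial\Omega}} f|_{\partial\Omega}$, and $\int_{\Omega}(f-a)^2 d\lambda_{\Omega} \ge \Var_{\lambda_{\Omega}} f$ since the variance equals $\inf_{c\in\Real}\int_{\Omega}(f-c)^2 d\lambda_{\Omega}$. This is the claimed inequality.

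I expect the only genuinely non-routine ingredient to be the existence of the Brenier map $T_0$ together with its regularity up to $\partial\Omega$, which is precisely the Caffarelli theory invoked in the paragraph preceding the statement; once that is granted, the divergence lower bound via AM--GM and the pointwise bound $\abs{T_0}\le 1$ are immediate, and the remainder is bookkeeping identical to Theorem~\ref{thm:Hardy1}. Note also that, in contrast to the radial vector field argument, here $\Omega$ need not be star-shaped --- only smoothness of $\partial\Omega$ and the convexity of the target ball are used.
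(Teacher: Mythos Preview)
Your proof is correct and essentially identical to the paper's: both use the Brenier map $\xi=T_0=\nabla\varphi_0$ in (\ref{eq:BasicHardy}), bound $div(\xi)$ below via AM--GM on the eigenvalues of $\nabla^2\varphi_0$ together with $\det dT_0=\abs{B_2^n}/\abs{\Omega}$, use $\abs{T_0}\le 1$ to control the gradient and boundary terms, and then shift by the boundary mean $a=\int_{\partial\Omega} f\,d\lambda_{\partial\Omega}$ to pass to variances. Your notation $n\beta$ is precisely the paper's $\alpha$, and the remaining bookkeeping matches line for line.
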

\begin{proof}
Identifying $\Real^n$ with its tangent spaces, we set $\xi = \nabla \varphi_0$ (where $\varphi_0$ was defined above). Note that since $\varphi_0$ is convex, hence $\text{Hess }\varphi_0$ is positive-definite, we may apply the arithmetic-geometric means inequality:
\[
div(\xi) = \Delta \varphi_0 = tr( Hess \varphi_0) \geq n (\text{det} \; Hess \varphi_0)^{1/n} = n (\text{det} \; dT_0)^{1/n} = n \frac{\abs{B_2^n}^{1/n}}{\abs{\Omega}^{1/n}} =: \alpha .
\]
Applying (\ref{eq:BasicHardy}) with $\lambda \equiv \alpha/2$, and using that $\xi = \nabla \varphi_0 \in B_2^n$, we obtain:
\[
\int_{\Omega} f^2 dx \leq \frac{4 \abs{\Omega}^{2/n}}{n^2 \abs{B_2^n}^{2/n}} \int_{\Omega} \abs{\nabla f}^2 dx + \frac{2 \abs{\Omega}^{1/n}}{n \abs{B_2^n}^{1/n}} \int_{\partial \Omega} f^2  d\H^{n-1} .
\]
In particular, when $f$ vanishes on $\partial \Omega$, we deduce (\ref{eq:FK}) with a slightly inferior constant; however, this constant is asymptotically (as $n \rightarrow \infty$) best possible, see Remark \ref{rem:Watson} below. Dividing by $\abs{\Omega}$ and applying the resulting inequality to $f - a$ with $a :=\int_{\partial \Omega} f d\lambda_{\partial \Omega}$, the assertion follows. 
\end{proof}

\begin{remark} \label{rem:Watson}
It is known (e.g. \cite[p. 139]{FollandBook}) that $1 / P^D_{B_2^n}$ is equal to the square of the first positive zero of the Bessel function of order $(n-2)/2$. According to \cite[p. 516]{Watson-BesselBook}, the first zero of the Bessel function of order $\beta$ is $\beta + c_0 \beta^{1/3} + O(1)$, for a constant $c_0 \simeq 1.855$, and so consequently $P^D_{B_2^n} = \frac{4}{n^2} (1 + o(1))$. By homogeneity, it follows that $P^D_{\Omega^*} = 4 \abs{\Omega^*}^{2/n}/(n^2 \abs{B_2^n}^{2/n})(1 + o(1))$, confirming that the constant in Theorem \ref{thm:Hardy2} is asymptotically best possible. 
\end{remark}

\begin{remark}
Note that if we start from (\ref{eq:ByParts}) and avoid employing the Cauchy-Schwartz inequality used to derive (\ref{eq:BasicHardy}), the above proof (using $\xi = \nabla \varphi_0$ and $g \equiv 1$) yields the isoperimetric inequality with sharp constant for smooth bounded domains:
\begin{equation} \label{eq:isop}
\abs{\partial \Omega} \geq n \abs{B_2^n}^{1/n} \abs{\Omega}^{(n-1)/n} .
\end{equation}
This proof was first noted by McCann \cite{McCannConvexityPrincipleForGases}, extending an analogous proof by Knothe and subsequently Gromov of the Brunn-Minkowski inequality \cite{Milman-Schechtman-Book} using the Knothe map \cite{KnotheMap}. See \cite{Figalli-SurveyInMyProceedings} for rigorous extensions of such an approach to non-smooth domains.  
\end{remark}

\subsection{Normal vector field}

In this subsection, we assume in addition that $\Omega$ is strictly convex. We employ the vector field:
\[
\xi(x) = \nu(x / \norm{x})  ~,
\]
the exterior unit normal-field to the convex set $\Omega_x := \norm{x} \Omega$. Note that this field is not well defined (and in particular not continuous) at the origin, so strictly speaking we cannot appeal to (\ref{eq:BasicHardy}). However,  this is not an issue, since $div(\xi)$ is homogeneous of degree $-1$, and so the Jacobian term in polar coordinates $r^{n-1}$ will absorb the blow-up of the divergence near the origin (recall $n \geq 2$). To make this rigorous, we simply repeat the derivation of (\ref{eq:BasicHardy}) by integrating by parts on $\Omega \setminus \eps B_2^n$, and note that we may take the limit as $\eps \rightarrow 0$, since the contribution of the additional boundary $\partial \eps B_2^n$ goes to zero as $\xi$ and $f$ are bounded. 

Now, observe that:
\[
div(\xi)(x) = H_{\partial \Omega_x}(x) = \frac{1}{\norm{x}} H_{\partial \Omega}(x / \norm{x}) ,
\]
where $H_S(y)$ denotes the mean-curvature (trace of the second fundamental form $\text{II}_S$) of a smooth oriented hypersurface $S$ at $x$. Indeed, by definition $\nabla \xi|_{\xi^{\perp}} = \text{II}_{\partial \Omega_x}$, and $2 \nabla_\xi \xi = \nabla \scalar{\xi,\xi} = 0$, and so  $div(\xi) = tr(\nabla \xi) = H_{\partial \Omega_x}$. 

\begin{theorem}[Mean-Curvature Weighted Hardy] \label{thm:Hardy3}
For any strictly convex $\Omega$ and smooth function $f$ defined on it:
\[
\int_{\Omega} \frac{H_{\partial \Omega}(x / \norm{x})}{\norm{x}} f^2(x) dx \leq 4 \int_{\Omega} \frac{\norm{x}}{H_{\partial \Omega}(x / \norm{x})} \scalar{\nabla f(x), \nu(x/\norm{x})}^2 dx + 2 \int_{\partial \Omega} f^2 d\H^{n-1} .
\]
\end{theorem}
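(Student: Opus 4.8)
The plan is to apply the basic inequality (\ref{eq:BasicHardy}) directly to the normal vector field $\xi(x) = \nu(x/\norm{x})$ introduced above, making the self-referential choice $\lambda := \frac{1}{2} div(\xi)$. The two properties of $\xi$ needed for this have already been recorded in the discussion preceding the statement: it is a unit field, $\abs{\xi} \equiv 1$, and $div(\xi)(x) = H_{\partial \Omega}(x/\norm{x}) / \norm{x}$. Since $\Omega$ is strictly convex, $\text{II}_{\partial \Omega}$ is positive-definite, so $H_{\partial \Omega} > 0$ on $\partial \Omega$ and hence $div(\xi) > 0$ on $\Omega \setminus \set{0}$; this is precisely what is needed to use $\lambda := \frac{1}{2} div(\xi)$ as an admissible positive weight in (\ref{eq:BasicHardy}).

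With this choice one has $div(\xi) - \lambda = \frac{1}{2} div(\xi) = \frac{1}{2} H_{\partial \Omega}(x/\norm{x})/\norm{x}$ and $1/\lambda = 2\norm{x}/H_{\partial \Omega}(x/\norm{x})$, while on $\partial \Omega$ the identity $\norm{x} = 1$ forces $\xi(x) = \nu(x)$, so $\scalar{\xi,\nu} = 1$ there and the boundary integrand in (\ref{eq:BasicHardy}) reduces to $f^2$. Substituting these three evaluations into (\ref{eq:BasicHardy}) and multiplying through by $2$ yields exactly the asserted inequality, with no further estimation required. In particular, unlike in Theorems \ref{thm:Hardy1} and \ref{thm:Hardy2}, one does not even need to pass to $f - a$, because here the weighted $f^2$ term on the left is no longer a multiple of $\int f^2$.

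The only genuine subtlety — again already flagged above — is that $\xi$, and hence $div(\xi)$ (homogeneous of degree $-1$), blows up at the origin, so (\ref{eq:BasicHardy}) does not literally apply on all of $\Omega$. I would handle this by repeating the integration-by-parts derivation of (\ref{eq:BasicHardy}) on $\Omega \setminus \eps B_2^n$ in place of $\Omega$ and letting $\eps \to 0$: the spurious boundary contribution over $\partial(\eps B_2^n)$ is $O(\eps^{n-1})$ since $\xi$ and $f$ are bounded; the weighted-gradient term has a bounded integrand near $0$, since there $\norm{x} \to 0$ while $H_{\partial \Omega}(x/\norm{x})$ stays bounded away from $0$; and the weighted $f^2$ term behaves like $r^{n-1}/r = r^{n-2}$ in polar coordinates, which is integrable for $n \geq 2$. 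Hence every term converges to its value on $\Omega$ and the inequality passes to the limit. In short, there is no real obstacle: once the structural facts $\abs{\xi} \equiv 1$ and $div(\xi) = H_{\partial \Omega}(x/\norm{x})/\norm{x}$ are in place, the theorem is a one-line specialization of (\ref{eq:BasicHardy}), and the only care needed is the routine $\eps$-truncation at the origin together with the observation that strict convexity keeps $H_{\partial \Omega}$ strictly positive.
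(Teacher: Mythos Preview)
Your proposal is correct and takes essentially the same approach as the paper: the paper's proof is the single line ``Immediate after appealing to (\ref{eq:BasicHardy}) with $\lambda(x) = \frac{1}{2} \frac{H_{\partial \Omega}(x / \norm{x})}{\norm{x}}$,'' and the $\eps$-truncation at the origin is handled in the discussion preceding the theorem exactly as you describe. You have simply spelled out the substitutions and the passage to the limit in more detail.
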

\begin{proof}
Immediate after appealing to (\ref{eq:BasicHardy}) with $\lambda(x) = \frac{1}{2} \frac{H_{\partial \Omega}(x / \norm{x})}{\norm{x}}$. 
\end{proof}

\begin{remark} \label{rem:Hardy3}
We note for future reference that by (\ref{eq:ByParts}) with $g \equiv 1$ we have:
\[
\int_{\Omega} \frac{H_{\partial \Omega}(x / \norm{x})}{\norm{x}} d\lambda_{\Omega} = \frac{\abs{\partial \Omega}}{\abs{\Omega}} .
\]
Also, integration in polar coordinates immediately verifies:
\[
\int_{\Omega} \frac{H_{\partial \Omega}(x / \norm{x})}{\norm{x}} d\lambda_{\Omega}(x) = \frac{n}{n-1}  \int_{\partial \Omega} H_{\partial \Omega} d\sigma_{\partial \Omega} ~,~ \int_{\Omega} \frac{\norm{x}}{H_{\partial \Omega}(x / \norm{x})} d\lambda_{\Omega}(x) =\frac{n}{n+1} \int_{\partial \Omega} \frac{d\sigma_{\partial \Omega}}{H_{\partial \Omega}} .
\]
\end{remark}

\subsection{Unconditional Sets}

Finally, we consider one additional vector-field for the Lyapunov method, which is useful when $\Omega$ is the intersection of an unconditional convex set with the first orthant $Q := [0,\infty)^{n}$ under a certain mixed Dirichlet--Neumann boundary condition. Let $int(Q)$ denote the interior of $Q$. 

\begin{theorem}
Let $\Omega \subset Q$ denote a set having smooth boundary, such that
every outer normal $\nu$ to $\partial \Omega \cap int(Q)$ has only non-negative coordinates. Let $f$ denote a smooth function vanishing on $\partial Q$. Then:
\[
\int_{\Omega} \frac{f^2}{\abs{x}^2} dx  \leq \frac{4}{n^2} \int_{\Omega} |\nabla f|^2 dx .
\]
\end{theorem}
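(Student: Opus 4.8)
The plan is to apply the basic Lyapunov-type inequality (\ref{eq:BasicHardy}) separately in each coordinate direction, using the one-dimensional Hardy vector field, and then to combine the resulting $n$ estimates by Cauchy--Schwarz; the Dirichlet data of $f$ on $\partial Q$ is what drives the bound, while the hypothesis on the outer normals of $\partial \Omega \cap int(Q)$ only serves to give the corresponding boundary term a favourable sign. Fix $i \in \{1,\dots,n\}$ and apply (\ref{eq:BasicHardy}) with $\xi(x) = \xi^{(i)}(x) := -\frac{1}{x_i} e_i$ and $\lambda(x) := \frac{1}{2 x_i^2}$. Since $div(\xi^{(i)}) = 1/x_i^2$, $\scalar{\xi^{(i)},\nabla f} = -\partial_i f/x_i$ and $\scalar{\xi^{(i)},\nu} = -\nu_i/x_i$, this should yield
\[
\int_{\Omega} \frac{f^2}{2 x_i^2}\, dx \le 2 \int_{\Omega} (\partial_i f)^2\, dx - \int_{\partial \Omega} \frac{\nu_i}{x_i} f^2 \, d\H^{n-1} .
\]

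Next I would check that the boundary integral is non-positive: on $\partial \Omega \cap \partial Q$ the function $f$ vanishes, so there is no contribution, while on $\partial \Omega \cap int(Q)$ we have $x_i > 0$ and $\nu_i \ge 0$ by hypothesis, hence $-\nu_i f^2/x_i \le 0$. This gives $\int_{\Omega} f^2/x_i^2\, dx \le 4 \int_{\Omega} (\partial_i f)^2\, dx$ for every $i$. Summing over $i = 1,\dots,n$ produces $\int_{\Omega} \brac{\sum_{i=1}^n 1/x_i^2} f^2\, dx \le 4 \int_{\Omega} \abs{\nabla f}^2 \, dx$, and the assertion then follows from the elementary inequality $\sum_{i=1}^n 1/x_i^2 \ge n^2/\sum_{i=1}^n x_i^2 = n^2/\abs{x}^2$, valid for $x \in int(Q)$ (it is just Cauchy--Schwarz applied to $(x_i)_{i}$ and $(1/x_i)_{i}$).

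The only delicate point --- and the main obstacle --- is that both $\xi^{(i)}$ and the integrand $f^2/x_i^2$ are singular along the face $\{x_i = 0\} \subset \partial Q$, so (\ref{eq:BasicHardy}) is not literally applicable. I would deal with this exactly as in the subsection on the normal vector field: derive the inequality on the truncated domain $\Omega \cap \{x_i > \eps\}$, on which $\xi^{(i)}$ is smooth up to the boundary, and let $\eps \to 0$. The hypothesis that $f$ vanishes on $\partial Q$ forces $f = O(x_i)$ near $\{x_i = 0\}$, so that $f^2/x_i^2$ stays bounded there and all the integrals involved converge; the hypothesis on the signs of the outer normals forces $\Omega$ to be a ``lower set'' in each coordinate, so that the truncating slice $\Omega \cap \{x_i = \eps\}$ shrinks into $\partial Q \cap \{x_i = 0\}$ as $\eps \to 0$, whence the spurious boundary term $\frac{1}{\eps} \int_{\Omega \cap \{x_i = \eps\}} f^2 \, d\H^{n-1}$ generated by the truncation tends to $0$.
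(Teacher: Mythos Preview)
Your proof is correct and essentially the same as the paper's: the paper uses the single vector field $\xi = -(1/x_1,\ldots,1/x_n)$ together with multiplicative Cauchy--Schwarz, while you apply (\ref{eq:BasicHardy}) coordinate-wise and sum, but both routes arrive at $\int_\Omega \bigl(\sum_i 1/x_i^2\bigr) f^2\,dx \le 4\int_\Omega |\nabla f|^2\,dx$ and conclude via the same arithmetic--harmonic mean inequality. Your truncation discussion for the singularity along $\partial Q$ is more explicit than the paper's (which glosses over it); note, however, that the ``lower set'' observation is not actually needed there, since $f = O(x_i)$ and the compactness of $\Omega$ already force the spurious boundary term $\tfrac{1}{\eps}\int_{\Omega\cap\{x_i=\eps\}} f^2\,d\H^{n-1}$ to be $O(\eps)$.
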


\begin{proof}
Consider the vector field:
\[
\xi  = - \brac{\frac{1}{x_1}, \cdots, \frac{1}{x_n}}.
\]
Since $\scalar{\xi, \nu} \leq 0$ in $int(Q) \cap \partial \Omega$ and $f|_{\partial Q \cap \partial \Omega} = 0$, we have:
\begin{align*}
 \int_{\Omega} \sum_{i=1}^{n} \frac{1}{x_i^2}  f^2 d x & = \int_{\Omega} div(\xi) f^2 dx  = -2 \int_{\Omega} f \scalar{\nabla f, \xi} dx + \int_{\partial \Omega} \scalar{\xi , \nu} f^2 d \mathcal{H}^{n-1}
\\&
\le -2 \int_{\Omega} f \scalar{ \nabla f, \xi } dx
\le 2 \sqrt{ \int_{\Omega} \sum_{i=1}^{n} \frac{1}{x_i^2}  f^2 d x } \sqrt{\int_{\Omega} |\nabla f|^2 dx}.
\end{align*}
Finally, by the arithmetic-harmonic means inequality, we obtain:
\[
n^2 \int_{\Omega}  \frac{f^2}{\abs{x}^2} dx  \leq \int_{\Omega} \sum_{i=1}^{n} \frac{1}{x_i^2}  f^2  dx \leq 4 \int_{\Omega} |\nabla f|^2 dx .
\]
\end{proof}

We stress that this result is very similar to the following variant of the Hardy inequality:
$$
\int_{\Omega} \frac{f^2}{|x|^2} dx \le \Bigl(\frac{2}{n-2}\Bigr)^2 \int_{\Omega} |\nabla f|^2 dx,
$$
which holds for any smooth $f$ vanishing on $\partial \Omega$ (see \cite{GM}).

\section{Reduction of KLS conjecture to subclasses of functions}

\subsection{Reduction to the boundary}

Let us now see how the Hardy-type inequalities of the previous section may be used to reduce the KLS conjecture to the behaviour of $1$-Lipschitz functions on the boundary $\partial \Omega$. We remark that we use here the term ``reduction" in a rather loose sense - we obtain a sufficient condition for the KLS conjecture to hold, but we were unable to show that this is also a necessary one. 

Together with (\ref{eq:equiv}), Theorem \ref{thm:Hardy1} immediately yields:
\begin{corollary} \label{cor:Hardy1}
For any smooth convex domain $\Omega$ with barycenter at the origin:
\begin{equation} \label{eq:Cone-Poincare}
 P^N_{\Omega} \leq C P^\infty_{\Omega} \leq C \brac{\frac{4}{n} P_{\Omega}^{Lin} + 2 P^\infty_{\sigma_{\partial \Omega}}} .
\end{equation}
where $C > 0$ is a universal constant. 
\end{corollary}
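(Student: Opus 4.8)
The plan is to simply chain two facts that are already available: the reverse-Poincar\'e inequality (\ref{eq:equiv}) and the Hardy-with-boundary inequality of Theorem \ref{thm:Hardy1}. The first inequality $P^N_\Omega \le C P^\infty_\Omega$ is exactly (\ref{eq:equiv}), which is valid since $\Omega$ is convex; so the whole content is to prove $P^\infty_\Omega \le \frac{4}{n} P^{Lin}_\Omega + 2 P^\infty_{\sigma_{\partial \Omega}}$ and then absorb the resulting universal constant. I would first note that a convex body with barycenter at the origin has the origin in its interior and is star-shaped about it, so Theorem \ref{thm:Hardy1} indeed applies to $\Omega$.

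To estimate $P^\infty_\Omega$, I would fix a smooth $f$ on $\Real^n$ with $\norm{\abs{\nabla f}}_{L^\infty(\lambda_\Omega)} \le 1$, equivalently $\abs{\nabla f} \le 1$ on $\Omega$, and insert it into (\ref{eq:Hardy}). The first term is controlled by the pointwise Cauchy--Schwarz bound $\scalar{x,\nabla f}^2 \le \abs{x}^2 \abs{\nabla f}^2 \le \abs{x}^2$, so that $\frac{4}{n^2}\int_\Omega \scalar{x,\nabla f}^2 d\lambda_\Omega \le \frac{4}{n^2}\int_\Omega \abs{x}^2 d\lambda_\Omega$; since the barycenter is at the origin, $\int_\Omega\abs{x}^2 d\lambda_\Omega = \sum_{i=1}^n \Var_{\lambda_\Omega}\scalar{\cdot,e_i} \le n\,P^{Lin}_\Omega$, which produces the first summand $\frac{4}{n} P^{Lin}_\Omega$. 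For the second term, the definition (\ref{eq:weak-Poincare}) of $P^\infty_{\sigma_{\partial\Omega}}$ applied to $f$ gives $\Var_{\sigma_{\partial\Omega}} f \le P^\infty_{\sigma_{\partial\Omega}}\norm{\abs{\nabla f}}^2_{L^\infty(\sigma_{\partial\Omega})}$, and since $\sigma_{\partial\Omega}$ is carried by $\partial\Omega \subset \Omega$ and $\abs{\nabla f}$ is continuous, $\norm{\abs{\nabla f}}_{L^\infty(\sigma_{\partial\Omega})} \le \sup_\Omega \abs{\nabla f} \le 1$. Adding the two estimates and taking the supremum over all admissible $f$ yields $P^\infty_\Omega \le \frac{4}{n} P^{Lin}_\Omega + 2 P^\infty_{\sigma_{\partial\Omega}}$, after which (\ref{eq:equiv}) finishes the proof.

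I do not expect a genuine obstacle here; the corollary is essentially a repackaging of Theorem \ref{thm:Hardy1}. The only point worth a word of caution is that the gradient norm in (\ref{eq:weak-Poincare}) defining $P^\infty_{\sigma_{\partial\Omega}}$ is the \emph{ambient} $\Real^n$-gradient, which is precisely why restricting an ambient $1$-Lipschitz $f$ costs nothing in the bound above; phrasing the reduction instead in terms of functions that are $1$-Lipschitz only for the geodesic distance on $\partial\Omega$ would require the additional (but standard) observation that the weak $L^2$--$L^\infty$ Poincar\'e constant is unchanged under this modification of the metric, which is not needed for the statement as given.
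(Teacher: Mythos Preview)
Your proof is correct and follows essentially the same approach as the paper: apply Theorem \ref{thm:Hardy1} to an arbitrary $1$-Lipschitz function, bound the first term via $\scalar{x,\nabla f}^2 \le |x|^2$ together with $\int_\Omega |x|^2\,d\lambda_\Omega = \sum_i \Var_{\lambda_\Omega}(x_i) \le n\,P^{Lin}_\Omega$, and invoke (\ref{eq:equiv}). Your write-up is simply more explicit about the intermediate steps (star-shapedness, the ambient-gradient interpretation of $P^\infty_{\sigma_{\partial\Omega}}$) than the paper's one-line proof.
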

\begin{proof}
Apply Theorem \ref{thm:Hardy1} to an arbitrary $1$-Lipschitz function $f$, and note that $\int \abs{x}^2 d\lambda_{\Omega} = \sum_{i=1}^n Var_{\lambda_{\Omega}}(x_i) \leq n P_{\Omega}^{Lin}$.
\end{proof}
Consequently, a sufficient criterion for verifying the KLS conjecture is to establish that $P^\infty_{\sigma_{\partial \Omega}} \leq C'$ for any isotropic convex $\Omega$ - a ``weak KLS conjecture for cone measures". This suggests that the most difficult part of the conjecture concerns the behavior of $1$-Lipschitz functions on the boundary.

\medskip

It may be more desirable to work with the Lebesgue measure $\lambda_{\partial \Omega}$ instead of the cone measure $\sigma_{\partial \Omega}$. Since:
\begin{equation} \label{eq:info}
P^{Lin}_{\Omega} \geq P^{Lin}_{\Omega^*} \simeq \abs{\Omega^*}^{2/n} ~,~  |B_2^n|^{1/n} \simeq \frac{1}{\sqrt{n}} ,
\end{equation}
(see e.g. \cite{Milman-Pajor-LK}), Theorem \ref{thm:Hardy2} together with (\ref{eq:equiv}) immediately yields:
\begin{corollary} \label{cor:Hardy2}
For any smooth convex domain $\Omega$ with barycenter at the origin:
\begin{equation} \label{eq:Bdry-Poincare}
 P^N_{\Omega} \leq C_1 P^\infty_{\Omega} \leq C_2 \brac{\frac{4}{n} P_{\Omega}^{Lin} + 2 I P^\infty_{\lambda_{\partial \Omega}}} ~,~ I := \frac{\abs{\partial \Omega}}{n \abs{B_2^n}^{1/n} \abs{\Omega}^{(n-1)/n}} .
\end{equation}
\end{corollary}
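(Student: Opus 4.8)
The plan is essentially to repeat verbatim the proof strategy of Corollary~\ref{cor:Hardy1}, but feeding in Theorem~\ref{thm:Hardy2} in place of Theorem~\ref{thm:Hardy1}. First I would recall that by \eqref{eq:equiv} we have $P^N_\Omega \leq C_1 P^\infty_\Omega$ for convex $\Omega$, which takes care of the first inequality in \eqref{eq:Bdry-Poincare}. For the second inequality, I would take an arbitrary $1$-Lipschitz function $f$ on $\Omega$ (i.e. $\norm{\abs{\nabla f}}_{L^\infty} \leq 1$) and apply Theorem~\ref{thm:Hardy2} to it. The first term on the right-hand side of \eqref{eq:Hardy2} becomes $\frac{4\abs{\Omega}^{2/n}}{n^2 \abs{B_2^n}^{2/n}} \int_\Omega \abs{\nabla f}^2 d\lambda_\Omega \leq \frac{4\abs{\Omega}^{2/n}}{n^2 \abs{B_2^n}^{2/n}}$, and I would bound this using \eqref{eq:info}: since $\abs{\Omega}^{2/n}/\abs{B_2^n}^{2/n} = (\abs{\Omega}/\abs{B_2^n})^{2/n}$ and $\abs{\Omega^*}^{2/n} = \abs{\Omega}^{2/n}$ (as $\Omega^*$ has the same volume), and $P^{Lin}_\Omega \geq P^{Lin}_{\Omega^*} \simeq \abs{\Omega^*}^{2/n}$, we get $\frac{\abs{\Omega}^{2/n}}{\abs{B_2^n}^{2/n}} \simeq \frac{\abs{\Omega^*}^{2/n}}{\abs{B_2^n}^{2/n}} \leq \frac{C}{\abs{B_2^n}^{2/n}} P^{Lin}_\Omega \simeq n\, P^{Lin}_\Omega$, using $\abs{B_2^n}^{1/n} \simeq 1/\sqrt n$. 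Hence the first term is $\leq \frac{C}{n} P^{Lin}_\Omega$, absorbing constants.

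For the second term on the right-hand side of \eqref{eq:Hardy2}, namely $\frac{2\abs{\partial\Omega}}{n\abs{B_2^n}^{1/n}\abs{\Omega}^{(n-1)/n}} \Var_{\lambda_{\partial\Omega}} f|_{\partial\Omega} = 2 I \Var_{\lambda_{\partial\Omega}} f|_{\partial\Omega}$, I would simply note that $f|_{\partial\Omega}$ is again $1$-Lipschitz on $\partial\Omega$ with respect to the induced geodesic (or even ambient) metric, so $\Var_{\lambda_{\partial\Omega}} f|_{\partial\Omega} \leq P^\infty_{\lambda_{\partial\Omega}}$ by the definition \eqref{eq:weak-Poincare} of $P^\infty$. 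This yields $\Var_{\lambda_\Omega} f \leq \frac{C}{n} P^{Lin}_\Omega + 2 I P^\infty_{\lambda_{\partial\Omega}}$ for all $1$-Lipschitz $f$, and taking the supremum over such $f$ gives $P^\infty_\Omega \leq \frac{C}{n}P^{Lin}_\Omega + 2 I P^\infty_{\lambda_{\partial\Omega}}$ (after a harmless renaming of the constant $4 \to $ some constant, or one keeps the $4/n$ as written since $C/n \le C'/n$). Combined with $P^N_\Omega \leq C_1 P^\infty_\Omega$, and adjusting $C_2$ to dominate $C_1 \max(C, 1)$, we obtain \eqref{eq:Bdry-Poincare}.

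I do not anticipate a genuine obstacle here — this is a bookkeeping corollary. The only point requiring a modicum of care is the translation of the volume ratio $\abs{\Omega}^{2/n}/(n^2\abs{B_2^n}^{2/n})$ into a multiple of $\frac{1}{n}P^{Lin}_\Omega$ via the Szeg\H{o}--Weinberger-type lower bound $P^{Lin}_\Omega \geq P^{Lin}_{\Omega^*} \simeq \abs{\Omega^*}^{2/n}$ and the asymptotics $\abs{B_2^n}^{1/n} \simeq 1/\sqrt n$; once \eqref{eq:info} is granted this is immediate. A secondary cosmetic issue is whether $\Var_{\sigma_{\partial\Omega}}$ versus $\Var_{\lambda_{\partial\Omega}}$ affects the argument, but it does not: the point of invoking Theorem~\ref{thm:Hardy2} rather than Theorem~\ref{thm:Hardy1} is precisely that its boundary term already features $\lambda_{\partial\Omega}$, so no further change of measure is needed. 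The proof is therefore a one-line application of Theorem~\ref{thm:Hardy2} to $1$-Lipschitz functions together with \eqref{eq:equiv} and \eqref{eq:info}, exactly parallel to the proof of Corollary~\ref{cor:Hardy1}.
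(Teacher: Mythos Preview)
Your proposal is correct and follows exactly the approach the paper intends: the paper states that Corollary~\ref{cor:Hardy2} follows immediately from Theorem~\ref{thm:Hardy2} together with \eqref{eq:equiv} and \eqref{eq:info}, which is precisely the argument you spell out (mirroring the proof of Corollary~\ref{cor:Hardy1}). The only cosmetic point is that since $P^\infty_{\lambda_{\partial\Omega}}$ is defined in \eqref{eq:weak-Poincare} for smooth functions on $\Real^n$ with the ambient gradient, you can simply extend your $1$-Lipschitz $f$ from $\Omega$ to $\Real^n$ and apply the definition directly, without invoking the intrinsic geodesic metric on $\partial\Omega$.
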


Note that for an isotropic convex body, the isoperimetric ratio term $I$  satisfies:
\begin{equation} \label{eq:boundary}
1 \leq I \leq C' \sqrt{n} \abs{\Omega}^{1/n} .
\end{equation}
The left-hand side in fact holds for any arbitrary set $\Omega$ by the sharp isoperimetric inequality (\ref{eq:isop}). The right-hand side follows since when $\Omega$ is convex and isotropic, it is known that $\Omega \supset \frac{1}{C} B_2^n$ (e.g. \cite{Milman-Pajor-LK}). Consequently (see e.g. \cite{Ball-Volume-Ratios}):
\[
\abs{\partial \Omega} = \lim_{\eps \rightarrow 0} \frac{\abs{\Omega + \eps B_2^n} - \abs{\Omega}}{\eps} \leq \lim_{\eps \rightarrow 0} \frac{|\Omega + \eps C \Omega| - \abs{\Omega}}{\eps} = n C  \abs{\Omega} ,
\]
and so (\ref{eq:boundary}) immediately follows. Up to the value of $C'$, the right-hand side is also sharp, as witnessed by the $n$-dimensional cube. Note that by (\ref{eq:info}), $\abs{\Omega}^{1/n} \simeq P^{Lin}_{\Omega^*} \leq P^{Lin}_{\Omega} = 1$ for any isotropic convex body $\Omega$, and so in fact $I \leq C'' \sqrt{n}$. 

\medskip

To avoid the isoperimetric ratio term $I$ which may be too large, we can instead invoke Theorem \ref{thm:Hardy3}:
\begin{corollary} \label{cor:Hardy3}
For any strictly convex smooth domain $\Omega$:
\begin{equation} \label{eq:Bdry-Poincare2}
P^N_{\Omega} \leq C_2 \brac{A^2 + A \frac{\abs{\partial \Omega}}{\abs{\Omega}} P_{\lambda_{\partial \Omega}}^\infty}  ~,~ A := \int_{\partial \Omega} \frac{d\sigma_{\partial K}}{H_{\partial \Omega}} .
\end{equation}
\end{corollary}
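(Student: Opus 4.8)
The plan is to deduce this from Theorem \ref{thm:Hardy3} together with the two identities of Remark \ref{rem:Hardy3} and the equivalence (\ref{eq:equiv}). By (\ref{eq:equiv}) it suffices to bound $\Var_{\lambda_\Omega} f$ for an arbitrary $f$ with $\norm{|\nabla f|}_{L^\infty}\le 1$, i.e. a $1$-Lipschitz function, and then pass to $P^N_\Omega\le C P^\infty_\Omega$. Abbreviate $w(x):=\norm{x}/H_{\partial\Omega}(x/\norm{x})$, a homogeneous-of-degree-one weight, so Remark \ref{rem:Hardy3} reads $\int_\Omega w^{-1}\,d\lambda_\Omega=\abs{\partial\Omega}/\abs{\Omega}$ and $\int_\Omega w\,d\lambda_\Omega=\tfrac{n}{n+1}A$. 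Set $g:=f-a$ with $a:=\int_{\partial\Omega}f\,d\lambda_{\partial\Omega}$, so that $\int_{\partial\Omega}g^2\,d\lambda_{\partial\Omega}=\Var_{\lambda_{\partial\Omega}}f$. Writing $g^2=(w^{-1/2}g)(w^{1/2}g)$ and applying Cauchy--Schwartz,
\[
\Var_{\lambda_\Omega}f\le\int_\Omega g^2\,d\lambda_\Omega\le\brac{\int_\Omega \tfrac1w g^2\,d\lambda_\Omega}^{1/2}\brac{\int_\Omega w\,g^2\,d\lambda_\Omega}^{1/2}.
\]

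The first factor I would control directly: dividing Theorem \ref{thm:Hardy3} by $\abs{\Omega}$, using $\scalar{\nabla f,\nu}^2\le1$ together with the identity $\int_\Omega w\,d\lambda_\Omega=\tfrac{n}{n+1}A$, and bounding the resulting boundary term by $\tfrac{\abs{\partial\Omega}}{\abs{\Omega}}\Var_{\lambda_{\partial\Omega}}f\le\tfrac{\abs{\partial\Omega}}{\abs{\Omega}}P^\infty_{\lambda_{\partial\Omega}}$, one gets $\int_\Omega \tfrac1w g^2\,d\lambda_\Omega\le 4A+2\tfrac{\abs{\partial\Omega}}{\abs{\Omega}}P^\infty_{\lambda_{\partial\Omega}}$.

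For the reverse-weighted factor $\int_\Omega w\,g^2\,d\lambda_\Omega$ I would run the integration-by-parts formula (\ref{eq:ByParts}) once more, now applied to the scalar $g^2$ and to the homogeneous-of-degree-two field $\xi(x):=w(x)\,x$. Euler's identity gives $div\,\xi=\scalar{\nabla w,x}+nw=(n+1)w$; on $\partial\Omega$ one has $w=1/H_{\partial\Omega}$, hence $\scalar{\xi,\nu}=\scalar{x,\nu}/H_{\partial\Omega}$, and since $\tfrac{\scalar{x,\nu}}{n}\H^{n-1}|_{\partial\Omega}=\abs{\Omega}\,\sigma_{\partial\Omega}$ the boundary term becomes $n\abs{\Omega}\int_{\partial\Omega}g^2/H_{\partial\Omega}\,d\sigma_{\partial\Omega}$. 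Estimating the cross term $\int_\Omega w\,\scalar{x,\nabla f}\,g\,dx$ by Cauchy--Schwartz against $\int_\Omega w\,g^2$ (which is then absorbed) and using $1$-Lipschitzness reduces matters to controlling $\int_\Omega w\abs{x}^2\abs{\nabla f}^2\,dx$ and the weighted boundary integral, which I would then bring into the form $A\,\Var_{\lambda_\Omega}f+A\tfrac{\abs{\partial\Omega}}{\abs{\Omega}}P^\infty_{\lambda_{\partial\Omega}}$ using Remark \ref{rem:Hardy3}, an auxiliary integration by parts against a radial field to shed the factor $\abs{x}^2$, and an appropriate re-centering. Feeding the two factor-bounds into the displayed Cauchy--Schwartz yields a quadratic inequality for $V:=\Var_{\lambda_\Omega}f$ of the shape $V^2\le\big(4A+2\tfrac{\abs{\partial\Omega}}{\abs{\Omega}}P^\infty_{\lambda_{\partial\Omega}}\big)\cdot C\big(A V+A\tfrac{\abs{\partial\Omega}}{\abs{\Omega}}P^\infty_{\lambda_{\partial\Omega}}\big)$; solving it, together with the elementary bound $\tfrac{\abs{\partial\Omega}}{\abs{\Omega}}A\ge1$ (Cauchy--Schwartz applied to the two identities of Remark \ref{rem:Hardy3}), gives $V\le C\big(A^2+A\tfrac{\abs{\partial\Omega}}{\abs{\Omega}}P^\infty_{\lambda_{\partial\Omega}}\big)$, and (\ref{eq:equiv}) then upgrades this to the asserted bound on $P^N_\Omega$.

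The main obstacle is the reverse-weighted estimate in the previous step. The crude bound $\scalar{x,\nabla f}^2\le\abs{x}^2$ drags the circumradius of $\Omega$ into the argument, which must not appear in the final inequality, so a sharper treatment of $\int_\Omega w\abs{x}^2\abs{\nabla f}^2\,dx$ is needed; and one must simultaneously express the two a priori distinct boundary integrals — the $\lambda_{\partial\Omega}$-variance produced by Theorem \ref{thm:Hardy3} and the $\tfrac1{H_{\partial\Omega}}\sigma_{\partial\Omega}$-integral produced by the field $\xi=w x$ — through the single parameter $P^\infty_{\lambda_{\partial\Omega}}$, even though the densities of these two boundary measures are not comparable. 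Reconciling these two points, presumably via a single well-chosen centering constant and a bookkeeping of the homogeneity, is the crux of the proof.
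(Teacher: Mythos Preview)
Your proposal has a genuine gap, which you yourself flag in the final paragraph: you do not know how to control the reverse-weighted factor $\int_\Omega w\,g^2\,d\lambda_\Omega$. The plan to ``shed the factor $\abs{x}^2$'' via an auxiliary integration by parts is vague and does not obviously work --- the weight $w\abs{x}^2$ is homogeneous of degree $3$, and any radial integration-by-parts will reproduce a term of the same type. Worse, the boundary term produced by $\xi=w\,x$ is $\int_{\partial\Omega} g^2/H_{\partial\Omega}\,d\sigma_{\partial\Omega}$, which is \emph{not} controlled by $P^\infty_{\lambda_{\partial\Omega}}$ (the density $1/H_{\partial\Omega}$ against $\sigma_{\partial\Omega}$ is not comparable to $\lambda_{\partial\Omega}$), so the single-parameter expression you aim for cannot be reached along this route.

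The paper circumvents this completely by one change of viewpoint: it bounds $P^{1,\infty}_\Omega$ rather than $P^\infty_\Omega$, invoking the equivalence (\ref{eq:equiv2}) instead of (\ref{eq:equiv}). Cauchy--Schwartz applied to the $L^1$ quantity $\int_\Omega \abs{g}\,d\lambda_\Omega$ (rather than to $\int_\Omega g^2$) gives
\[
\Bigl(\int_\Omega \abs{g}\,d\lambda_\Omega\Bigr)^2 \le \int_\Omega w\,d\lambda_\Omega \cdot \int_\Omega \tfrac1w\,g^2\,d\lambda_\Omega = B \int_\Omega \tfrac1w\,g^2\,d\lambda_\Omega, \qquad B=\tfrac{n}{n+1}A,
\]
so the ``reverse-weighted'' factor is simply the constant $B$, not $\int_\Omega w\,g^2$. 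The other factor is exactly what your first step already handles via Theorem~\ref{thm:Hardy3}: $\int_\Omega \tfrac1w\,g^2\,d\lambda_\Omega \le 4B + 2\tfrac{\abs{\partial\Omega}}{\abs{\Omega}} P^\infty_{\lambda_{\partial\Omega}}$. This immediately yields $P^{1,\infty}_\Omega \le B\bigl(4B + 2\tfrac{\abs{\partial\Omega}}{\abs{\Omega}} P^\infty_{\lambda_{\partial\Omega}}\bigr)$, and (\ref{eq:equiv2}) finishes the proof. In short: your first-factor estimate is exactly the right ingredient; the missing idea is to pair it with the constant $\int_\Omega w$ rather than with $\int_\Omega w\,g^2$, which is what passing to $P^{1,\infty}$ accomplishes.
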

Note that by Jensen's inequality and Remark \ref{rem:Hardy3}:
\[
A \frac{\abs{\partial \Omega}}{\abs{\Omega}} \geq \frac{1}{\int_{\partial \Omega} H_{\partial \Omega} d\sigma_{\partial K}} \frac{\abs{\partial \Omega}}{\abs{\Omega}} = \frac{n}{n-1} ,
\]
but perhaps the term $A \frac{\abs{\partial \Omega}}{\abs{\Omega}}$ is nevertheless still more favorable than $I$. 

\medskip
For the proof, we require the following variant of the notion of $P^\infty_{\Omega}$:
\[
P^{1,\infty}_{\Omega} := \sup \set{ \brac{\int \abs{f - med_{\lambda_{\Omega}} f} d\lambda_{\Omega}}^2 \; ; \; \norm{\abs{\nabla f}}_{L^\infty(\lambda_{\Omega})} \leq 1} .
\]
It follows from the results of \cite{EMilman-RoleOfConvexity} that for any convex $\Omega$:
\begin{equation} \label{eq:equiv2}
P^N_{\Omega} \leq C_1 P^{1,\infty}_{\Omega} \leq C_2 P^\infty_{\Omega} \leq C_2 P^N_{\Omega} .
\end{equation}

\begin{proof}[Proof of Corollary \ref{cor:Hardy3}]
By Cauchy-Schwartz:
\[
\brac{\int_{\Omega} \abs{f} d\lambda_{\Omega}}^2 \leq \int_{\Omega} \frac{\norm{x}}{H_{\partial \Omega}(x / \norm{x})} d\lambda_\Omega(x)\int_{\Omega} \frac{H_{\partial \Omega}(x / \norm{x})}{\norm{x}} f^2(x) d\lambda_{\Omega}(x)
\]
Assuming that $f$ is $1$-Lipschitz and invoking Theorem \ref{thm:Hardy3}, it follows that:
\[
\brac{\int_{\Omega} \abs{f} d\lambda_{\Omega}}^2 \leq B \brac{ 4B + 2 \frac{|\partial \Omega|}{|\Omega|} \int_{\partial \Omega} f^2 d\lambda_{\partial \Omega}} ~,~ B:= \int_{\Omega} \frac{\norm{x}}{H_{\partial \Omega}(x / \norm{x})} d\lambda_\Omega(x) .
\]
Applying this to $f - a$ where $a := \int_{\partial \Omega} f d\lambda_{\partial \Omega}$, we obtain:
\[
P^{1,\infty}_{\Omega} \leq B \brac{ 4B + 2 \frac{|\partial \Omega|}{|\Omega|} P^{\infty}_{\lambda_{\partial \Omega}} } .
\]
But $B = \frac{n}{n+1} A$ by Remark \ref{rem:Hardy3}, and so the assertion follows from (\ref{eq:equiv2}). 
\end{proof}

\subsection{A concrete bound}

To control the variance of $1$-Lipschitz functions on the boundary $\partial \Omega$, 
we recall an argument from our previous work \cite{KolesnikovEMilman-Reilly}, where a generalization of the following inequality of A. Colesanti \cite{ColesantiPoincareInequality} was obtained:
\begin{equation} \label{eq:Colesanti}
 \int_{\partial \Omega} H f^2 d\H^{n-1} - \frac{n-1}{n}\frac{\brac{\int _{\partial \Omega} f d\H^{n-1}}^2}{Vol(\Omega)} \leq  \int_{\partial \Omega} \scalar{\II_{\partial \Omega}^{-1} \;\nabla_{\partial \Omega} f,\nabla_{\partial \Omega} f} d\H^{n-1} ~,
\end{equation} 
for any strictly convex $\Omega$ with smooth boundary and smooth function $f$ on $\partial \Omega$.
Applying the Cauchy-Schwartz inequality, we obtain for any $1$-Lipschitz function $f$ with $\int_{\partial \Omega} f d\lambda_{\partial \Omega} = 0$:
\[
\brac{\int_{\partial \Omega} \abs{f - med_{\lambda_{\partial \Omega}} f} d\lambda_{\partial \Omega}}^2 \leq \brac{\int_{\partial \Omega} \abs{f} d\lambda_{\partial \Omega}}^2 \leq \int_{\partial \Omega} \frac{ d\lambda_{\partial \Omega} }{H_{\partial \Omega}}\int_{\partial \Omega} \frac{ d\lambda_{\partial \Omega}}{\kappa_{\partial \Omega}} ,
 \]
 where $\kappa_{\partial \Omega}(x)$ denotes the (positive) minimal principle curvature of $\partial \Omega$ at $x$, so that $\text{II}_{\partial \Omega} \geq \kappa Id$. Consequently, the right-hand-side is an upper bound on $P^{1,\infty}_{\lambda_{\partial \Omega}} $. Using the equivalence (\ref{eq:equiv2}) in a more general Riemannian setting, we were able to deduce in \cite{KolesnikovEMilman-Reilly} that:
\begin{equation} \label{eq:boundary-SG}
(P^{\infty}_{\lambda_{\partial \Omega}} \leq \; ) \; P^N_{\lambda_{\partial \Omega}} \leq C P^{1,\infty}_{\lambda_{\partial \Omega}}  \leq C \int_{\partial \Omega} \frac{ d\lambda_{\partial \Omega} }{H_{\partial \Omega}}\int_{\partial \Omega} \frac{ d\lambda_{\partial \Omega}}{\kappa_{\partial \Omega}} .
 \end{equation}

Plugging this estimate into the estimates of the previous subsection, we obtain:
\begin{theorem} \label{thm:KLS-MainBound}
For $n$ larger than a universal constant and any isotropic strictly convex body $\Omega$ with smooth boundary in $\Real^n$:
\[
P^{N}_{\Omega} \leq C_2 \frac{\abs{\partial \Omega}}{\sqrt{n} \abs{\Omega}^{\frac{n-1}{n}}} \int_{\partial \Omega} \frac{ d\lambda_{\partial \Omega} }{H_{\partial \Omega}}\int_{\partial \Omega} \frac{ d\lambda_{\partial \Omega}}{\kappa_{\partial \Omega}} .
\]
\end{theorem}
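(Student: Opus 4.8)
The plan is to feed the boundary Poincar\'e estimate (\ref{eq:boundary-SG}) into Corollary~\ref{cor:Hardy2}, and then to dispose of the resulting linear term $\frac{4}{n} P^{Lin}_\Omega$ by invoking the elementary lower bound on $P^N_\Omega$ that is available for isotropic bodies.

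First I would record what isotropicity gives. On the one hand, $P^{Lin}_\Omega = \sup_{\theta \in S^{n-1}} \Var_{\lambda_\Omega} \scalar{\cdot,\theta} = 1$; on the other hand, since linear functions are admissible test functions in (\ref{eq:SG}), also $P^N_\Omega \geq P^{Lin}_\Omega = 1$. Moreover an isotropic body has its barycenter at the origin, so Corollary~\ref{cor:Hardy2} applies; inserting into it the bound $P^\infty_{\lambda_{\partial \Omega}} \leq C \int_{\partial \Omega} \frac{d\lambda_{\partial \Omega}}{H_{\partial \Omega}} \int_{\partial \Omega} \frac{d\lambda_{\partial \Omega}}{\kappa_{\partial \Omega}}$ supplied by (\ref{eq:boundary-SG}) yields
\[
1 \;\leq\; P^N_\Omega \;\leq\; C_2 \brac{ \frac{4}{n} + 2 C \, I \int_{\partial \Omega} \frac{d\lambda_{\partial \Omega}}{H_{\partial \Omega}} \int_{\partial \Omega} \frac{d\lambda_{\partial \Omega}}{\kappa_{\partial \Omega}} } ~,\quad I := \frac{\abs{\partial \Omega}}{n \abs{B_2^n}^{1/n} \abs{\Omega}^{(n-1)/n}} .
\]

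Next I would use the hypothesis that $n$ exceeds a universal constant, chosen so that $\frac{4 C_2}{n} \leq \frac{1}{2}$. Combined with $P^N_\Omega \geq 1$, the displayed inequality then forces $\frac{4 C_2}{n} \leq \frac{1}{2} \leq \frac{1}{2} P^N_\Omega$, hence
\[
\frac{1}{2} P^N_\Omega \;\leq\; P^N_\Omega - \frac{4 C_2}{n} \;\leq\; 2 C_2 C \, I \int_{\partial \Omega} \frac{d\lambda_{\partial \Omega}}{H_{\partial \Omega}} \int_{\partial \Omega} \frac{d\lambda_{\partial \Omega}}{\kappa_{\partial \Omega}} ,
\]
that is, $P^N_\Omega \leq 4 C_2 C \, I \int_{\partial \Omega} \frac{d\lambda_{\partial \Omega}}{H_{\partial \Omega}} \int_{\partial \Omega} \frac{d\lambda_{\partial \Omega}}{\kappa_{\partial \Omega}}$. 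Finally, since $\abs{B_2^n}^{1/n} \simeq 1/\sqrt{n}$ (see (\ref{eq:info})), we may replace $I$ by the equivalent quantity $\frac{\abs{\partial \Omega}}{\sqrt{n}\, \abs{\Omega}^{(n-1)/n}}$ at the cost of changing the universal constant, and this is precisely the asserted estimate.

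The argument is thus mainly bookkeeping on top of the two substantial inputs --- the Faber--Krahn-type inequality with boundary behind Corollary~\ref{cor:Hardy2} (Theorem~\ref{thm:Hardy2} together with (\ref{eq:equiv})) and the Colesanti-type boundary estimate (\ref{eq:boundary-SG}) from \cite{KolesnikovEMilman-Reilly}. The only genuine subtlety, and the reason the statement is restricted to large $n$, is disposing of the term $\frac{4}{n} P^{Lin}_\Omega$: one has to notice that for an isotropic body this term is $O(1/n)$ while $P^N_\Omega$ is itself bounded below by $1$, so for $n$ large it is negligible and can be absorbed into the remaining term with only a universal loss. (One could instead keep all $n \geq 2$ by bounding $\int_{\partial \Omega} H_{\partial \Omega}\, d\lambda_{\partial \Omega}$ from above --- e.g. via the Minkowski quermassintegral inequality together with the isoperimetric bound (\ref{eq:boundary}) --- but the route through $P^N_\Omega \geq 1$ is shorter.)
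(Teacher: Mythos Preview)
Your proof is correct and follows essentially the same route as the paper: plug the boundary estimate (\ref{eq:boundary-SG}) into Corollary~\ref{cor:Hardy2}, then absorb the $\frac{4}{n} P^{Lin}_\Omega$ term using $P^{Lin}_\Omega \leq P^N_\Omega$ for $n$ large. The only cosmetic difference is that the paper keeps $P^{Lin}_\Omega$ symbolic and uses $P^{Lin}_\Omega \leq P^N_\Omega$ directly, whereas you substitute $P^{Lin}_\Omega = 1$ from isotropicity and then use $P^N_\Omega \geq 1$; these are of course equivalent here.
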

\begin{proof}
The easiest option is to invoke Corollary \ref{cor:Hardy2}, but note that Corollary \ref{cor:Hardy1} or \ref{cor:Hardy3} would also work after an appropriate application of Cauchy-Schwartz. Coupled with (\ref{eq:boundary-SG}), it follows that:
\[
P^{N}_{\Omega} \leq C_1 \brac{ \frac{4}{n} P^{Lin}_{\Omega} + \frac{\abs{\partial \Omega}}{\sqrt{n} \abs{\Omega}^{\frac{n-1}{n}}} \int_{\partial \Omega} \frac{ d\lambda_{\partial \Omega} }{H_{\partial \Omega}}\int_{\partial \Omega} \frac{ d\lambda_{\partial \Omega}}{\kappa_{\partial \Omega}} }.
\] 
But since $P^{Lin}_{\Omega} \leq P^N_{\Omega}$, the assertion follows for e.g. $n \geq 8 C_1$. 
\end{proof}

Note that this estimate yields the correct result, up to constants, for the Euclidean ball. 
A concrete class of isotropic convex bodies for which the first term above $\frac{\abs{\partial \Omega}}{\sqrt{n}}$ is upper bounded by a constant, is the class of quadratically uniform convex bodies $\Omega$, since in isotropic position $\Omega \supset c \sqrt{n} B_2^n$ and $\abs{\Omega}^{1/n} \simeq 1$ (see e.g. \cite{KlartagEMilman-2-Convex}). It is not hard to show that when in addition $\Omega \subset C_1\sqrt{n} B_2^n$ - i.e. $\Omega$ is an isotropic quadratically uniform convex body which is isomorphic to a Euclidean ball - then $P^N_{\Omega} \leq C_2$. It would be very interesting to see if the additional assumption $\Omega \subset C_1\sqrt{n} B_2^n$ could be removed by employing the estimate given by Theorem \ref{thm:KLS-MainBound}.

\subsection{Reduction to harmonic functions}

We conclude this section by providing another different reduction of the KLS conjecture: 

\begin{theorem}[Reduction to Harmonic Functions]
There exists a universal constant $C>1$ so that:
\[
P^N_{\Omega} \leq C P^H_{\Omega} ~,~ P^H_{\Omega} := \sup_{h \in H} \frac{\Var_{\lambda_{\Omega}} h}{\int \abs{\nabla h}^2 d\lambda_{\Omega}}  ,
\]
where $H$ denotes the class of harmonic functions $h$ on $\Omega$. In fact, for large enough $n$, one can use $C=2$. 
\end{theorem}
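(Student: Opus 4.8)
The plan is to decompose an arbitrary smooth test function $f$ on $\Omega$ into a harmonic part and a part controlled by a Dirichlet-type (Faber--Krahn) estimate. Given $f$, let $h$ be the harmonic extension of $f|_{\partial\Omega}$, i.e. the solution of $\Delta h = 0$ in $\Omega$, $h = f$ on $\partial\Omega$; set $g := f - h$, so that $g$ vanishes on $\partial\Omega$. Since $g$ has zero Dirichlet boundary values, we may apply the classical Faber--Krahn / Hardy inequality in the form given by Theorem~\ref{thm:Hardy2} (or Remark~\ref{rem:Watson}) to bound $\int_\Omega g^2\,d\lambda_\Omega$ by $P^D_{\Omega^*}\int_\Omega|\nabla g|^2\,d\lambda_\Omega$, and crucially $P^D_{\Omega^*} \simeq \frac{4}{n^2}\abs{\Omega^*}^{2/n}/\abs{B_2^n}^{2/n}$. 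On the other hand, $\int_\Omega |\nabla h|^2\,d\lambda_\Omega \leq \int_\Omega |\nabla f|^2\,d\lambda_\Omega$ because $h$ is the harmonic (Dirichlet-energy-minimizing) function with the given boundary values, and by the same token $\int_\Omega \scalar{\nabla g,\nabla h}\,d\lambda_\Omega = 0$ (integrate by parts: $g$ vanishes on the boundary and $h$ is harmonic), so $\int_\Omega|\nabla f|^2 = \int_\Omega|\nabla g|^2 + \int_\Omega|\nabla h|^2$.

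Next I would control the variance. Write $\Var_{\lambda_\Omega} f \leq 2\Var_{\lambda_\Omega} h + 2\Var_{\lambda_\Omega} g \leq 2\Var_{\lambda_\Omega} h + 2\int_\Omega g^2\,d\lambda_\Omega$. The first term is at most $2 P^H_\Omega \int_\Omega|\nabla h|^2\,d\lambda_\Omega \leq 2P^H_\Omega \int_\Omega|\nabla f|^2\,d\lambda_\Omega$ by definition of $P^H_\Omega$ applied to the harmonic function $h$. For the second term, Theorem~\ref{thm:Hardy2} (applied to $g$, which vanishes on $\partial\Omega$, so the boundary term drops) gives $2\int_\Omega g^2\,d\lambda_\Omega \leq \frac{8\abs{\Omega}^{2/n}}{n^2\abs{B_2^n}^{2/n}}\int_\Omega|\nabla g|^2\,d\lambda_\Omega \leq \frac{8\abs{\Omega}^{2/n}}{n^2\abs{B_2^n}^{2/n}}\int_\Omega|\nabla f|^2\,d\lambda_\Omega$. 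Combining, $P^N_\Omega \leq 2P^H_\Omega + \frac{8\abs{\Omega}^{2/n}}{n^2\abs{B_2^n}^{2/n}}$. Since $\abs{B_2^n}^{1/n}\simeq 1/\sqrt n$ by (\ref{eq:info}), the second summand is $\simeq \frac{1}{n}\abs{\Omega}^{2/n}$; and one always has $P^H_\Omega \geq P^{Lin}_\Omega$ (linear functions are harmonic), while $P^{Lin}_\Omega \gtrsim \abs{\Omega^*}^{2/n} \simeq \abs{\Omega}^{2/n}/n \cdot n$... more carefully, $P^{Lin}_\Omega \geq P^{Lin}_{\Omega^*} \simeq \abs{\Omega}^{2/n}$ is false dimensionally; rather $P^{Lin}_{\Omega^*}\simeq\abs{\Omega^*}^{2/n}/n$ up to the right normalization — in any case $P^H_\Omega$ dominates a quantity of order at least $\frac{1}{n}\abs{\Omega}^{2/n}$, so the additive term $\frac{8}{n^2}\abs{\Omega}^{2/n}\abs{B_2^n}^{-2/n} \simeq \frac{C}{n}\abs{\Omega}^{2/n}$ is bounded by $C' P^H_\Omega$ and can be absorbed, yielding $P^N_\Omega \leq C P^H_\Omega$. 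For the sharper constant $C=2$ when $n$ is large, one notes the additive term is $(1+o(1))P^D_{\Omega^*}$ and uses that $P^H_\Omega \ge P^N_\Omega \ge P^N_{\Omega^*} \gg P^D_{\Omega^*}$ (Szeg\H{o}--Weinberger vs. the $n^2$ versus $n$ scaling), so the additive term is $o(1)\cdot P^N_\Omega$, which can be moved to the left-hand side; tracking constants then gives $P^N_\Omega \le (2+o(1))P^H_\Omega$.

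The main obstacle I anticipate is not the decomposition itself but justifying the regularity and the integration-by-parts: one needs $h$ smooth enough up to $\partial\Omega$ for the orthogonality $\int\scalar{\nabla g,\nabla h}=0$ and for $\Var_{\lambda_\Omega} h$, $\int|\nabla h|^2$ to make sense and for $P^H_\Omega$ (defined via a sup over harmonic functions) to legitimately apply to $h$. Since $\Omega$ is convex with smooth boundary and $f|_{\partial\Omega}$ is smooth, elliptic regularity gives $h\in C^\infty(\overline\Omega)$, so this is routine. A secondary point requiring a little care is the absorption argument for the additive constant: one must quote a lower bound on $P^H_\Omega$ (via $P^{Lin}_\Omega$ and the Szeg\H{o}--Weinberger inequality $P^N_{\Omega^*}\simeq\abs{\Omega^*}^{2/n}$) that beats $\frac{C}{n}\abs{\Omega}^{2/n}$; this is exactly where the gap between the Dirichlet scaling $1/n^2$ and the Neumann scaling $1/n$ on the ball is used, and it is also what powers the improvement to $C=2$ for large $n$.
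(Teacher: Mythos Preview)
Your approach is essentially identical to the paper's: decompose $f$ into its harmonic extension $h$ and a Dirichlet part $g=f-h$, bound $\Var g$ by Faber--Krahn, bound $\Var h$ by $P^H_\Omega$, use the orthogonality $\int\scalar{\nabla g,\nabla h}=0$ to get $\int|\nabla f|^2=\int|\nabla g|^2+\int|\nabla h|^2$, and absorb the Faber--Krahn constant via $P^H_\Omega\ge P^{Lin}_\Omega\ge P^{Lin}_{\Omega^*}$. Two small slips to clean up: your hesitation is unwarranted --- the estimate $P^{Lin}_{\Omega^*}\simeq|\Omega^*|^{2/n}$ \emph{is} correct (it is exactly (\ref{eq:info}) in the paper, since $|B_2^n|^{2/n}\simeq 1/n$ matches the $1/n$ in the variance on the ball), and in your $C=2$ argument the inequality $P^H_\Omega\ge P^N_\Omega$ is backwards, though your actual absorption step (bounding the additive term by $o(1)P^N_\Omega$ via Szeg\H{o}--Weinberger and moving it to the left) is valid and does not use that inequality.
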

\begin{proof}
Fix an arbitrary smooth function $f$ on $\Omega$, and solve the Poisson equation $\Delta h = 0$, $h|_{\partial \Omega} = f|_{\partial \Omega}$. One has:
\[
\Var_{\lambda_{\Omega}} f \le 2 ( \Var_{\lambda_{\Omega}} (f-h) + \Var_{\lambda_{\Omega}} h).
\]
Since $f-h$ vanishes on $\partial \Omega$, the Faber-Krahn inequalities (\ref{eq:FK}) or (\ref{eq:Hardy2}) imply:
\[
\Var_{\lambda_{\Omega}} (f-h) \leq \frac{4 \abs{\Omega}^{2/n}}{n^2 \abs{B_2^n}^{2/n}}  \int |\nabla f - \nabla h|^2 \ d \lambda_{\Omega} .
\]
It follows that:
\[
\Var_{\lambda_{\Omega}} f  \leq \max \brac{\frac{C_1}{n} \abs{\Omega}^{2/n} , 2 P^H_{\Omega}}  \brac{\int |\nabla f - \nabla h|^2 \ d \lambda_{\Omega} +  \int |\nabla h|^2 \ d\lambda_{\Omega} } .
\]
But since $h$ is harmonic and $(f-h)|_{\partial \Omega}=0$ we have $\int \langle \nabla f - \nabla h, \nabla h \rangle d \lambda_{\Omega} =0 $, and consequently:
\[
\int \bigl( |\nabla f - \nabla h|^2 + |\nabla h|^2 \bigr) d\lambda_{\Omega} = \int |\nabla f|^2 d\lambda_{\Omega} .
\]
It remains to note that since linear functions are harmonic, $P^H_{\Omega} \geq P^{Lin}_{\Omega} \geq P^{Lin}_{\Omega^*} \simeq \abs{\Omega^*}^{2/n}$, concluding the proof.
\end{proof}
 
\begin{remark}
It is not clear to us if it enough to only control the variance of harmonic functions $h$, so that the restriction $h|_{\partial \Omega}$ is $1$-Lipschitz. The reason is that we do not know whether $h$ has bounded Lipschitz constant on the entire $\Omega$, and so we cannot apply (\ref{eq:equiv}). We believe that the latter would be an interesting property of convex domains which is worth investigating. A small observation in this direction is that $\abs{\nabla h}^2$ is subharmonic and hence satisfies the maximum principle, but we do not know how to control the derivative in the normal direction to $\partial \Omega$.
\end{remark}

\section{Transferring Poincar\'e inequalities from $\mu$ to $K_{\mu}$}

Given an absolutely continuous probability measure $\mu$ on $\Real^n$ having upper-semi-continuous density $f$, the following set was considered by K. Ball \cite{Ball-kdim-sections}:
\[
K_{\mu} := \set{ x \in \Real^n \; ; \; \norm{x}_{K_\mu} \leq 1} ~,~ \text{where } ~ \frac{1}{\norm{x}_{K_\mu}} =  \brac{n \int_0^\infty r^{n-1} f(r x) dr}^{1/n} .
\]
Integration in polar coordinates immediately verifies that $\abs{K_\mu} = \norm{\mu} = 1$. 
A remarkable observation of Ball is that when $f$ is log-concave (i.e. $\log f : \Real^n \rightarrow \Real \cup \set{-\infty}$ is concave), then $K_\mu$ is a compact convex set (see \cite{KlartagPerturbationsWithBoundedLK} for the case that $f$ is non-even). If in addition the origin is in the interior of the support of $\mu$, then it will also be in the interior of $K_\mu$ - we will say in that case that $K_\mu$ is a convex body. 

Given a convex body $K$, consider the map $T(x) = \frac{x}{\norm{x}_K}$ where $\norm{\cdot}_K$ denotes the gauge function of $K$ (when $K$ is origin-symmetric, this function defines a norm). It is an elementary exercise to show that $T_{*} \mu = \sigma_{\partial K}$ if and only if $K = c K_\mu$ for some $c>0$ (see \cite[Proposition 3.1]{EMilmanSodinIsoperimetryForULC}). 

\begin{proposition} \label{prop:op}
Let $\mu = f(x) dx$ denote a probability measure with log-concave density on $\Real^n$ ($n \geq 3$) and barycenter at the origin, and set $T(x) = \frac{x}{\norm{x}_{K_\mu}}$.  Assume that $K_\mu \supset R B_2^n$. Then:
\[
\int_{\Real^n} \norm{dT^*(x)}^2_{op} d\mu(x) \leq C \frac{f(0)^{2/n}}{R^2} \int_{K_\mu} \abs{x}^2 dx  ,
\]
where $\norm{\cdot}_{op}$ denotes the operator norm, and $dT^*(x)$ is the dual operator to the differential $dT(x) : T_x \Real^n \rightarrow T_{T(x)} \partial K_{\mu}$. 
\end{proposition}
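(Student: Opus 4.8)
The plan is to compute $dT(x)$ explicitly, estimate its dual's operator norm pointwise, and then integrate against $\mu$, converting the integral over $\mathbb{R}^n$ into an integral over $\partial K_\mu$ via the polar-coordinate decomposition and finally into an integral over $K_\mu$. Write $\rho(x) := 1/\norm{x}_{K_\mu}$, so that $T(x) = \rho(x) x$ maps $\mathbb{R}^n$ onto $\partial K_\mu$. Differentiating, $dT(x) = \rho(x)\, \mathrm{Id} + x \otimes \nabla \rho(x)$. Since $\nabla \norm{\cdot}_{K_\mu}$ is $0$-homogeneous and $\rho$ is $(-1)$-homogeneous, along the radial direction $x$ the map $dT(x)$ kills $x$ (as it must, $T$ being constant on rays up to the radial rescaling — more precisely $dT(x)x = \rho x - x\norm{x}'_{K_\mu}/\norm{x}^2 \cdot \ldots$; the point is that the image is the tangent space $T_{T(x)}\partial K_\mu$, which is $(n-1)$-dimensional). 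On the orthogonal complement of $\nabla\norm{\cdot}_{K_\mu}(x)$ the map acts essentially as multiplication by $\rho(x)$. The upshot I would extract is the pointwise bound
\[
\norm{dT^*(x)}_{op} = \norm{dT(x)}_{op} \leq C \brac{ \rho(x) + \abs{x}\,\abs{\nabla\rho(x)} } \leq C' \rho(x),
\]
where the last step uses that $\norm{x}_{K_\mu}$ is a gauge function of a body containing $R B_2^n$, hence $1/R$-Lipschitz, so $\abs{\nabla\norm{\cdot}_{K_\mu}} \le 1/R$ and $\abs{\nabla \rho(x)} = \abs{\nabla\norm{\cdot}_{K_\mu}(x)}/\norm{x}_{K_\mu}^2 \le \rho(x)^2/R$, while $\abs{x}\rho(x)^2/R \le \rho(x) \cdot (\abs{x}\rho(x)/R)$ and $\abs{x}\rho(x) = \abs{x}/\norm{x}_{K_\mu} \leq \ldots$ — here one uses $B_2^n \supset$ nothing directly, but $\abs{x}/\norm{x}_{K_\mu}$ is the Euclidean length of $T(x) \in \partial K_\mu$, which need not be bounded; so instead I would keep the two terms and observe $\abs{x}\abs{\nabla\rho(x)} \le \abs{T(x)}\,\rho(x)/R$, absorbing $\abs{T(x)}$ into the final integral. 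Thus more carefully $\norm{dT^*(x)}_{op} \le C\rho(x)(1 + \abs{T(x)}/R)$.

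Next I would integrate. In polar coordinates adapted to $K_\mu$, writing $x = r\theta$ with $\theta \in \partial K_\mu$ (i.e. $\norm{\theta}_{K_\mu}=1$) and $r = \norm{x}_{K_\mu}$, the measure $d\mu = f\,dx$ disintegrates, and since $T_*\mu = \sigma_{\partial K_\mu}$ (because $K_\mu = K_\mu$, using the cited Proposition 3.1 of \cite{EMilmanSodinIsoperimetryForULC}), the push-forward of $\mu$ under $T$ is exactly the cone measure. Also $\rho(x) = 1/r$ and $T(x) = \theta/\norm{\theta}_{K_\mu} = \theta$, so after the change of variables the integral $\int \norm{dT^*}^2_{op}\,d\mu$ is controlled by $C\int_{\partial K_\mu}(1+\abs{\theta}/R)^2 \brac{\int_0^{1/\norm{\theta}_{...}} \ldots}$ — more transparently, I would directly write
\[
\int_{\mathbb{R}^n} \norm{dT^*(x)}_{op}^2 \, d\mu(x) \le C \int_{\mathbb{R}^n} \rho(x)^2\brac{1 + \frac{\abs{T(x)}}{R}}^2 d\mu(x),
\]
and then use that $\rho(x)^2 = 1/\norm{x}_{K_\mu}^2$ together with the explicit density: by Ball's construction and log-concavity, $f(r\theta) \le f(0) e^{\ldots}$, and the relation $\abs{K_\mu}=1$ gives control of the radial integrals. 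The cleanest route: push forward by $T$ to get $\int_{\partial K_\mu}(1+\abs{\theta}/R)^2 \, \Phi(\theta)\,d\sigma_{\partial K_\mu}(\theta)$ where $\Phi(\theta)$ is an average of $\rho^2$ along the ray, and $\rho^2$ along the ray through $\theta$ ranges over $[\,1,\infty)$ weighted by the log-concave density; the weighted average of $1/r^2$ over $r$ with weight $r^{n-1}f(r\theta)$ is $\simeq \norm{\theta}_{K_\mu}^2 \cdot (\text{something} \le C)$ when $f$ is log-concave, in fact comparable to $f(0)^{2/n}$ after using $\abs{K_\mu}=1$ and the standard one-dimensional log-concave estimate. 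Finally $\int_{\partial K_\mu}\abs{\theta}^2 f(0)^{-\text{power}}\,d\sigma_{\partial K_\mu}(\theta)$ is exactly comparable to $\int_{K_\mu}\abs{x}^2\,dx$ up to the dimensional constant and the factor $f(0)^{2/n}$ (again using $\abs{K_\mu}=1$ and integration in polar coordinates with respect to $K_\mu$, since $d\sigma_{\partial K_\mu} = \frac{1}{n\abs{K_\mu}}\scalar{x,\nu}\,d\H^{n-1}$ and $\int_{K_\mu}\abs{x}^2 dx = \frac{n}{n+2}\int_{\partial K_\mu}\abs{\theta}^2 \, \frac{\scalar{\theta,\nu}}{n}\,d\H^{n-1}$, roughly).

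The main obstacle, and where I would spend most of the care, is the passage from the operator-norm bound to the stated right-hand side: one needs the radial log-concave averaging to produce precisely the factor $f(0)^{2/n}$ (not $f(0)^{2/n}$ times an uncontrolled quantity) while simultaneously matching the geometric quantity $\int_{K_\mu}\abs{x}^2 dx$. This hinges on two facts that must be combined with the right homogeneities: first, for a log-concave density $f$ on a ray, $n\int_0^\infty r^{n-1}f(r\theta)\,dr = \norm{\theta}_{K_\mu}^{-n}$ by definition of $K_\mu$, so the radial density is itself a normalized log-concave weight on $[0,\infty)$; second, standard one-sided bounds for moments of one-dimensional log-concave measures give $\int r^{n-1}f(r\theta)\,\rho^2\,dr \le C \norm{\theta}_{K_\mu}^{-n}\cdot \norm{\theta}_{K_\mu}^2 / n$ with a universal $C$. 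The dependence on $f(0)$ enters when comparing $\norm{\theta}_{K_\mu}$ normalization to the ambient density at the origin: $f(0) = \lim_{r\to 0} f(r\theta)$ and log-concavity forces $f(r\theta) \le f(0)$ for $r$ small but one needs a two-sided comparison over the whole ray, supplied by the inequality $f(r\theta) \ge f(0)(1 - r\norm{\theta}_{K_\mu})_+^{n}$-type lower bounds valid for log-concave densities (Fradelizi-type). Threading these together to land exactly on $C f(0)^{2/n} R^{-2}\int_{K_\mu}\abs{x}^2\,dx$, with the $\abs{T(x)}/R$ correction term absorbed (it contributes the $\int_{K_\mu}\abs{x}^2$ factor directly), is the delicate bookkeeping step; everything else is differentiation and polar coordinates.
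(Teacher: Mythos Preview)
Your overall architecture matches the paper's: compute $dT$, bound $\norm{dT^*}_{op}$ pointwise using $K_\mu \supset R B_2^n$, integrate in polar coordinates, and invoke one-dimensional log-concave moment inequalities along rays. The paper executes the first step more cleanly by proving an \emph{exact} formula $\norm{dT^*(x)}_{op} = \abs{x}/\bigl(\norm{x}_{K_\mu}^2\,h_{K_\mu}(\nu_{\partial K_\mu}(T(x)))\bigr)$, so that $h_{K_\mu}\ge R$ immediately gives the single term $\abs{x}/(R\,\norm{x}_{K_\mu}^2)$; your triangle-inequality bound $\norm{dT^*}_{op}\le C\rho(x)(1+\abs{T(x)}/R)$ is valid but produces a superfluous $\rho(x)$ term that then needs its own treatment.

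There is, however, a genuine gap in your radial step. The asserted bound ``$\int r^{n-1}f(r\theta)\rho^2\,dr \le C\norm{\theta}_{K_\mu}^{-n}\norm{\theta}_{K_\mu}^2/n$ with a universal $C$'' is false as stated: take $g(r)=a^{-n}\mathbf{1}_{[0,a]}(r)$ along a ray, normalized so $n\int r^{n-1}g=1$; then $\int r^{n-3}g = a^{-2}/(n-2)$, which is unbounded as $a\to 0$. The correct moment comparison for log-concave $g$ is $k_{n-2}(\theta)\le M_\theta^{\frac{1}{n-2}-\frac{1}{n}}\,k_n(\theta)$ with $M_\theta=\sup_r f(r\theta)$, which forces a factor $M_\theta^{2/n}\le M^{2/n}$ into the bound. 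This is precisely where $f(0)^{2/n}$ enters, via Fradelizi's \emph{upper} bound $M=\max f\le e^n f(0)$ for a log-concave density with barycenter at the origin --- not via lower bounds of the form $f(r\theta)\ge f(0)(1-r\norm{\theta})_+^n$ as you suggest (and log-concavity does not imply $f(r\theta)\le f(0)$ even for small $r$). Once you insert the missing $M^{2/n}$ and apply Fradelizi in the right direction, the remaining polar-coordinate identity $\int_{S^{n-1}}\norm{\theta}_{K_\mu}^{-(n+2)}d\theta=(n+2)\int_{K_\mu}\abs{x}^2\,dx$ closes the argument exactly as in the paper.
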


For the proof, we first require:
\begin{lemma} \label{lem:op}
If $T(x) = \frac{x}{\norm{x}_{K}}$ and $\partial K$ is smooth then:
\[
\norm{dT^*(x)}_{op} = \frac{\abs{x} \abs{\nabla \norm{x}_K} }{\norm{x}^2_K} = \frac{1}{\norm{x}_K \scalar{x/\abs{x} , \nu_{\partial K}(T(x))} } = \frac{\abs{x}}{\norm{x}_K^2 h_K(\nu_{\partial K}(T(x)))} ,
\]
where $h_K(\theta) = \sup\set{ \scalar{x,\theta} ; x \in K}$ denotes the support function of $K$.
\end{lemma}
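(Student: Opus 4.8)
The plan is to compute the differential $dT(x)$ explicitly, identify its dual, and then take operator norms. Since $T(x) = x/\norm{x}_K$ is homogeneous of degree zero, $dT(x)$ annihilates the radial direction $x/\abs{x}$ and maps $T_x\Real^n$ onto the tangent space $T_{T(x)}\partial K$. Writing $\rho(x) := \norm{x}_K$, I would differentiate $T(x) = x/\rho(x)$ to get
\[
dT(x) = \frac{1}{\rho(x)}\brac{I - \frac{x \otimes \nabla\rho(x)}{\rho(x)}} .
\]
This is $\frac{1}{\rho(x)}$ times a rank-one perturbation of the identity; since $\rho$ is $1$-homogeneous, Euler's identity gives $\scalar{\nabla\rho(x), x} = \rho(x)$, so the operator $I - \frac{x\otimes\nabla\rho(x)}{\rho(x)}$ is an oblique projection onto $(\nabla\rho(x))^\perp = T_{T(x)}\partial K$ along the line spanned by $x$. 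Its dual is the oblique projection onto $x^\perp$ along $\nabla\rho(x)$, and a standard computation of the norm of an oblique (rank-one-perturbation) projection yields $\norm{I - \frac{x\otimes\nabla\rho(x)}{\rho(x)}}_{op} = \frac{\abs{x}\,\abs{\nabla\rho(x)}}{\rho(x)}$ (note the relevant unit vectors $x/\abs{x}$ and $\nabla\rho(x)/\abs{\nabla\rho(x)}$ are not orthogonal, which is exactly why the norm exceeds $1$). Dividing by $\rho(x)$ gives the first claimed expression $\norm{dT^*(x)}_{op} = \abs{x}\,\abs{\nabla\rho(x)}/\rho(x)^2$.

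For the second equality, I would use the geometric meaning of $\nabla\rho$. On $\partial K$ one has $\rho \equiv 1$, and the unit outer normal to $\partial K$ at a point $y$ is $\nu_{\partial K}(y) = \nabla\rho(y)/\abs{\nabla\rho(y)}$; by $0$-homogeneity of $\nabla\rho$, this also equals $\nabla\rho(x)/\abs{\nabla\rho(x)}$ for any $x$ on the ray through $y = T(x)$. Pairing Euler's identity $\scalar{\nabla\rho(x),x} = \rho(x)$ with this gives $\abs{\nabla\rho(x)}\,\scalar{\nu_{\partial K}(T(x)), x} = \rho(x)$, i.e. $\abs{\nabla\rho(x)} = \rho(x)/\scalar{x/\abs{x},\nu_{\partial K}(T(x))}\cdot \frac{1}{\abs{x}}$. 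Substituting into the first expression collapses it to $\frac{1}{\rho(x)\scalar{x/\abs{x},\nu_{\partial K}(T(x))}}$, which is the second form. For the third equality, note that $\scalar{x/\abs{x},\nu_{\partial K}(T(x))} = \frac{1}{\abs{x}}\scalar{x,\nu_{\partial K}(T(x))}$ and that $\scalar{x,\nu} = \rho(x)\scalar{T(x),\nu}$; since $T(x)\in\partial K$ and $\nu = \nu_{\partial K}(T(x))$ is the outer normal there, $\scalar{T(x),\nu_{\partial K}(T(x))} = h_K(\nu_{\partial K}(T(x)))$ by the defining property of the support function at a boundary point (the supporting hyperplane with outer normal $\nu$ touches $K$ at $T(x)$). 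Combining, $\scalar{x/\abs{x},\nu_{\partial K}(T(x))} = \frac{\rho(x)}{\abs{x}} h_K(\nu_{\partial K}(T(x)))$, and plugging this into the second form yields $\frac{\abs{x}}{\rho(x)^2 h_K(\nu_{\partial K}(T(x)))}$.

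The only genuinely delicate point is the operator-norm computation for the oblique projection, so I would isolate it: if $u,v$ are unit vectors with $\scalar{u,v} \ne 0$, then $P = I - \frac{u \otimes v}{\scalar{u,v}}\cdot\scalar{u,v}$ — more precisely, the operator $Q = I - \frac{1}{\scalar{w,z}} w\otimes z$ with $w = x$, $z = \nabla\rho(x)$ — has range $z^\perp$ and kernel $\mathbb{R}w$, and one checks $\norm{Q}_{op} = \norm{Q^*}_{op} = \abs{w}\abs{z}/\abs{\scalar{w,z}}$; this is a short linear-algebra lemma (diagonalize $Q^*Q$, whose only nontrivial eigenvalue comes from the span of $w$ and the component of $z$ orthogonal to $w$). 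Everything else is bookkeeping with $0$- and $1$-homogeneity and the identifications of $\nabla\rho$ with the normal and of $\scalar{T(x),\nu}$ with $h_K(\nu)$; smoothness of $\partial K$ guarantees $\nabla\rho$ exists and is nonzero away from the origin and that $\nu_{\partial K}$ is well-defined, so no further regularity caveats are needed.
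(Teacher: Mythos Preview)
Your proposal is correct and follows essentially the same approach as the paper. Both compute $dT(x)=\frac{1}{\rho}\bigl(I-\frac{x\otimes\nabla\rho}{\rho}\bigr)$ and reduce the operator-norm question to a rank-one perturbation of the identity; the paper substitutes $\nabla\rho=\frac{\rho}{\scalar{x,\nu}}\nu$ first and then computes $dT\,dT^*|_{\nu^\perp}=\frac{1}{\rho^2}(I+u\otimes u)$ with $u=\nu-\frac{x}{\scalar{x,\nu}}$, while you leave $\nabla\rho$ in place and invoke the oblique-projection norm identity $\snorm{I-\frac{w\otimes z}{\scalar{w,z}}}_{op}=\frac{|w|\,|z|}{|\scalar{w,z}|}$---the same linear-algebra computation in a slightly different order.
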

\begin{proof}
Since $\nabla \norm{x}_K$ is parallel to $\nu = \nu_{\partial K}(T (x))$, taking the partial derivative in the direction of $x$ verifies that:
\[
\nabla \norm{x}_K = \frac{\norm{x}_K}{\scalar{x , \nu}} \nu .
\]
Consequently $dT(x) = \frac{1}{\norm{x}} (Id - \frac{x \otimes \nu}{\scalar{x,\nu}})$. Now observe that:
\[
 \norm{dT(x)^*}_{op}^2 = \sup \set{ \scalar{dT(x) dT(x)^* v , v } \; ; \; v \in T^*_{T(x)} \partial K \; , \;\abs{v} \leq 1}. 
 \]
 But $dT(x) dT(x)^* = \frac{1}{\norm{x}^2}(Id + u \otimes u)$, where $u = \nu - \frac{x}{\scalar{x,\nu}}$. Consequently, its top eigenvalue is:
 \[
 \frac{1}{\norm{x}^2}(1 + |u|^2) =  \frac{1}{\norm{x}^2_K} \frac{\abs{x}^2}{\scalar{x , \nu}^2 } .
 \]
 It remains to note that when $x \in \partial K$ then $\scalar{x,\nu_{\partial K}(x)}$ is precisely the support function of $K$ in the direction of the latter normal. Consequently, $\scalar{x,\nu} =  \norm{x}_K h_K(\nu)$, and the assertion follows. 
\end{proof}

\begin{proof}[Proof of Proposition \ref{prop:op}]
It is easy to see that if the density $f$ of $\mu$ is smooth, then so is $\partial K_\mu$, and so by approximation we may assume that this is indeed the case. 
Consequently, if $K_\mu \supset R B_2^n$, we have by Lemma \ref{lem:op}:
\begin{equation} \label{eq:wasteful}
\int_{\Real^n} \norm{dT^*(x)}^2_{op} d\mu(x) = \int_{\Real^n} \frac{\abs{x}^2}{\norm{x}^4_{K_\mu} h^2_{K_\mu}(\nu_{\partial K_{\mu}}(T(x)))} d\mu(x) \leq \frac{1}{R^2} \int_{\Real^n} \frac{\abs{x}^2}{\norm{x}_{K_\mu}^4} d\mu(x) .  
\end{equation}
Integrating in polar coordinates, we have:
\begin{equation} \label{eq:mid}
\int_{\Real^n} \frac{\abs{x}^2}{\norm{x}_{K_\mu}^4} d\mu(x) = \int_{S^{n-1}} \frac{1}{\norm{\theta}^4_{K_\mu}} \int_0^\infty r^{n-3} f(r \theta) dr d\theta .
\end{equation}
Denoting $k_p(\theta) := (p \int_0^\infty r^{p-1} f(r \theta) dr)^{1/p}$, we use that for any non-negative function $f$ on $[0,\infty)$:
\[
0 < p_1 \leq p_2 \Rightarrow  \frac{k_{p_1}(\theta)}{M_\theta^{1/p_1}} \leq  \frac{k_{p_2}(\theta)}{M_\theta^{1/p_2}} , 
\]
where $M_\theta = \sup_{r \in [0,\infty)} f(r \theta)$. See \cite{BarlowMarshallProschan,Ball-kdim-sections,Milman-Pajor-LK} for case that $f$ is even and \cite[Lemmas 2.5,2.6]{KlartagPerturbationsWithBoundedLK} or \cite[Lemma 3.2 and (3.12)]{PaourisSmallBall} for the general case. Applying this to (\ref{eq:mid}) with $p_1 = n-2$ and $p_2 = n$, denoting $M = \max_{x \in \Real^n} f(x)$, and using polar integration again, it follows that:
\[
\int_{\Real^n} \frac{\abs{x}^2}{\norm{x}_{K_\mu}^4} d\mu(x) \leq M^{2/n} \frac{1}{n-2} \int_{S^{n-1}} \frac{1}{\norm{\theta}_{K_\mu}^{n+2}} d\theta = M^{2/n} \frac{n+2}{n-2} \int_{K_\mu} \abs{x}^2 dx .
\]
It remains to apply a result of M. Fradelizi \cite{FradeliziCentroid} stating that for a log-concave measure $\mu = f(x) dx$ with barycenter at the origin:
\[
M \leq e^n f(0) .
\]
Plugging all of these estimates into (\ref{eq:wasteful}), the assertion is proved. 
\end{proof}

We can now obtain:
\begin{theorem} \label{thm:fvr}
Let $\mu = f(x) dx$ denote a log-concave probability measure on $\Real^n$ having barycenter at the origin. Assume that $K_\mu \supset R B_2^n$. Then for large-enough $n$:
\[
P^N_{K_\mu} \leq C \frac{\int \abs{x}^2 d \lambda_{K_\mu}(x)}{R^2}  f(0)^{2/n} P^N_{\mu}  .
\]
In particular, if $\mu$ satisfies the KLS conjecture then so does $\lambda_{K_\mu}$, as soon as $\frac{\int \abs{x}^2 d \lambda_{K_\mu}}{R^2}$ is bounded above by a constant. 
\end{theorem}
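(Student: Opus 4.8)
The plan is to transfer the Poincar\'e inequality from $\mu$ to $\lambda_{K_\mu}$ by pushing test functions forward along the map $T(x) = \frac{x}{\norm{x}_{K_\mu}}$, whose relevant property — established in the discussion preceding Proposition \ref{prop:op} — is that $T_*\mu = \sigma_{\partial K_\mu}$. The difficulty is that $\sigma_{\partial K_\mu}$ is a measure on the \emph{boundary} $\partial K_\mu$, not on the body $K_\mu$ itself, so $T$ alone does not directly connect $\mu$ with $\lambda_{K_\mu}$; I must first reduce the Neumann constant of $\lambda_{K_\mu}$ to the behaviour of functions on $\partial K_\mu$, and this is precisely what the Hardy-type Theorem \ref{thm:Hardy1} provides. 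Concretely, given a $1$-Lipschitz function $g$ on $K_\mu$, Theorem \ref{thm:Hardy1} (recall $K_\mu$ is a convex body, star-shaped with gauge $\norm{\cdot}_{K_\mu}$) yields
\[
\Var_{\lambda_{K_\mu}} g \leq \frac{4}{n^2} \int_{K_\mu} \scalar{x,\nabla g}^2 d\lambda_{K_\mu} + 2 \Var_{\sigma_{\partial K_\mu}} g|_{\partial K_\mu} .
\]
The first term is at most $\frac{4}{n^2}\int \abs{x}^2 d\lambda_{K_\mu}$ since $g$ is $1$-Lipschitz, which is of lower order; the whole game is to bound $\Var_{\sigma_{\partial K_\mu}} g|_{\partial K_\mu}$.

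Next I would handle the boundary term via the transport map. Write $h := g\circ T$, a function on $\Real^n$; then $\Var_{\sigma_{\partial K_\mu}} g|_{\partial K_\mu} = \Var_\mu h$ because $T_*\mu = \sigma_{\partial K_\mu}$. Applying the Poincar\'e inequality for $\mu$ gives $\Var_\mu h \leq P^N_\mu \int \abs{\nabla h}^2 d\mu$. By the chain rule $\nabla h(x) = dT^*(x)\, \nabla g(T(x))$, so since $g$ is $1$-Lipschitz, $\abs{\nabla h(x)}^2 \leq \norm{dT^*(x)}_{op}^2$. Here is where Proposition \ref{prop:op} enters: $\int \abs{\nabla h}^2 d\mu \leq \int \norm{dT^*}^2_{op} d\mu \leq C \frac{f(0)^{2/n}}{R^2}\int_{K_\mu}\abs{x}^2 dx$. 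Combining, $\Var_{\sigma_{\partial K_\mu}} g|_{\partial K_\mu} \leq C \frac{f(0)^{2/n}}{R^2} \brac{\int_{K_\mu}\abs{x}^2 dx}\, P^N_\mu$. Since $\int_{K_\mu}\abs{x}^2 dx = \int \abs{x}^2 d\lambda_{K_\mu}$ (recall $\abs{K_\mu}=1$), feeding both terms back into the Hardy inequality and using that $P^N_\mu \geq P^{Lin}_\mu \gtrsim 1$ (so the lower-order first term is absorbed for $n$ large), one obtains the claimed bound on $\Var_{\lambda_{K_\mu}} g$. Since this holds for every $1$-Lipschitz $g$, the reduction $P^N_{K_\mu} \leq C\, P^\infty_{K_\mu}$ of \eqref{eq:equiv} for the convex body $K_\mu$ finishes the argument.

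The main obstacle — and the reason the statement needs ``for large-enough $n$'' — is bookkeeping the two competing terms in the Hardy inequality: the $\frac{4}{n^2}\int\abs{x}^2 d\lambda_{K_\mu}$ term must be shown to be dominated by (a constant times) the transport term $\frac{f(0)^{2/n}}{R^2}\int\abs{x}^2 d\lambda_{K_\mu}\, P^N_\mu$. This follows because $P^N_\mu \geq P^{Lin}_\mu \geq P^{Lin}_{\mu^*} \simeq \norm{\mu}^{2/n}/\ldots$ is bounded below, $R^2 \leq \abs{x}$-type quantities are controlled since $R B_2^n \subset K_\mu$ forces $R \lesssim 1/\sqrt n$ times volume factors, and $f(0)^{2/n}\to 1$-ish under normalization; so the transport term is at least of order $\frac{1}{n}\int\abs{x}^2 d\lambda_{K_\mu}$, matching the first term up to constants. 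The final sentence of the theorem is then immediate: if $\mu$ satisfies KLS, $P^N_\mu \leq C$, and if moreover $\frac{\int\abs{x}^2 d\lambda_{K_\mu}}{R^2}$ is bounded, then $P^N_{K_\mu} \leq C' f(0)^{2/n}$, which is bounded since $f(0)^{2/n} \leq 1$ for a log-concave density with total mass one after the standard normalization (or at worst contributes a universal constant), so $\lambda_{K_\mu}$ satisfies KLS as well.
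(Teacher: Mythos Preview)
Your overall structure is correct and matches the paper's: apply Theorem~\ref{thm:Hardy1} (equivalently Corollary~\ref{cor:Hardy1}) to reduce to $\Var_{\sigma_{\partial K_\mu}}$, then use $T_*\mu=\sigma_{\partial K_\mu}$ together with the Poincar\'e inequality for $\mu$ and Proposition~\ref{prop:op}, and invoke the equivalence~(\ref{eq:equiv}) to pass from $P^\infty_{K_\mu}$ to $P^N_{K_\mu}$.

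There are, however, two genuine gaps in your execution. First, your absorption of the lower-order Hardy term is incorrect: the claims ``$P^N_\mu\gtrsim 1$'', ``$R\lesssim 1/\sqrt n$'', and ``$f(0)^{2/n}\to 1$-ish'' are all either false in general or meaningless without an unstated normalization (in fact $RB_2^n\subset K_\mu$ with $\abs{K_\mu}=1$ only gives $R\leq C\sqrt n$, the opposite direction). The clean absorption, which is what the paper does, bounds the first Hardy term by $\frac{4}{n}P^{Lin}_{K_\mu}\leq\frac{4}{n}P^N_{K_\mu}$ (using $\int\abs{x}^2 d\lambda_{K_\mu}\leq n\,P^{Lin}_{K_\mu}$), so that after applying~(\ref{eq:equiv}) one obtains $P^N_{K_\mu}\leq C\frac{4}{n}P^N_{K_\mu}+C\cdot\text{(transport term)}$; for $n$ large enough the first summand is at most $\frac{1}{2}P^N_{K_\mu}$ and is absorbed into the left-hand side. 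No comparison between the two right-hand terms is needed at all.

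Second, your treatment of the ``in particular'' clause misreads the KLS conjecture: for a general (non-isotropic) log-concave $\mu$, the conjecture asserts $P^N_\mu\leq C\,P^{Lin}_\mu$, not $P^N_\mu\leq C$. The conclusion to be proved is $P^N_{K_\mu}\leq C\,P^{Lin}_{K_\mu}$, and merely bounding $f(0)^{2/n}$ by a constant does not deliver this. The missing ingredient is Ball's equivalence~(\ref{eq:Lin-equiv}), namely $P^{Lin}_{K_\mu}\simeq f(0)^{2/n}P^{Lin}_\mu$, which converts the first part of the theorem into
\[
P^N_{K_\mu}\leq C\,\frac{\int\abs{x}^2 d\lambda_{K_\mu}}{R^2}\,f(0)^{2/n}P^N_\mu \leq C'A\,f(0)^{2/n}P^{Lin}_\mu \simeq C'A\,P^{Lin}_{K_\mu},
\]
exactly the KLS statement for $K_\mu$.
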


\begin{remark}
This result was already noticed by Bo'az Klartag and the second-named author using a more elaborate computation which was never published. The idea is to control the average Lipschitz constant of the radial map from \cite{EMilmanSodinIsoperimetryForULC} pushing forward $\mu$ onto $\lambda_{K_\mu}$ instead of $\sigma_{\partial K_\mu}$. 
\end{remark}

\begin{proof}
We employ Corollary \ref{cor:Hardy1} and Proposition \ref{prop:op}. When $n$ is large-enough, $C \frac{4}{n} P^{Lin}_{K_\mu} \leq \frac{1}{2} P^{Lin}_{K_\mu} \leq \frac{1}{2} P^N_{K_\mu}$, and hence by Corollary \ref{cor:Hardy1}:
\[
P^N_{K_\mu} \leq C' P^\infty_{\sigma_{\partial K_\mu}} .
\]
Denoting $T(x) = \frac{x}{\norm{x}_{K_\mu}}$, we see by Proposition \ref{prop:op} that for any $1$-Lipschitz function $f$ on $\partial K_\mu$:
\begin{eqnarray*}
& & \Var_{\sigma_{\partial K_\mu}}(f) = \Var_{\mu}(f \circ T) \leq P^N_\mu \int \abs{\nabla (f \circ T)}^2 d\mu \\
&\leq &  P^N_\mu \int \abs{\nabla_{\partial K_\mu} f}^2  (T(x)) \norm{dT^*(x)}^2_{op} d\mu(x) \\
&\leq &  P^N_\mu \int \norm{dT^*(x)}^2_{op} d\mu(x) \leq C \frac{f(0)^{2/n}}{R^2} \int_{K_\mu} \abs{x}^2 dx \; P^N_\mu .
\end{eqnarray*}
This implies the first part of the assertion. 

The second part follows since, as shown by Ball \cite{Ball-kdim-sections} (see \cite{KlartagPerturbationsWithBoundedLK} for the non-even case):
\begin{equation} \label{eq:Lin-equiv}
P^{Lin}_{K_\mu} \simeq f(0)^{2/n} P^{Lin}_\mu .
\end{equation}
 Consequently:
\[
P^N_\mu \leq A P^{Lin}_\mu \;\;\; \Rightarrow \;\;\; P^N_{K_\mu} \leq C \frac{\int \abs{x}^2 d \lambda_{K_\mu}}{R^2} \; A \; P^{Lin}_{K_\mu} .
\]
\end{proof}

We thus obtain a simple recipe for obtaining good spectral-gap estimates on certain convex bodies $K$ having in-radius $R$ so that $\int \abs{x}^2 d \lambda_{K}(x) / R^2$ is bounded above by a constant: if we can find a log-concave measure $\mu$ having good spectral-gap so that $K_\mu = K$, Theorem \ref{thm:fvr} will imply that $K$ also has good spectral-gap. 

\begin{remark}
An inspection of the proofs of Proposition \ref{prop:op} and Theorem \ref{thm:fvr} shows that we may replaces $\frac{1}{R^2}$ in all of the occurrences above, with the more refined expression $\int_{\partial K_{\mu}} \frac{d\sigma_{\partial K_\mu}}{h^2_{K_\mu}(\nu_{\partial K_\mu})}$. However, we do not know how to effectively control the latter quantity. 
\end{remark}

\subsection{An example: unit-balls of $\ell_p^n$, $p \in [1,2]$}

We illustrate this for unit-balls $B_p^n$ of $\ell_p^n$, $p \in [1,2]$. It was first shown by S. Sodin \cite{SodinLpIsoperimetry} that these convex bodies satisfy the KLS conjecture. An alternative derivation was obtain in \cite{EMilman-RoleOfConvexity} by using the weaker $P^\infty$ parameter and the equivalence (\ref{eq:equiv}). Both approaches relied on the Schechtman--Zinn concentration estimates for these bodies \cite{SchechtmanZinn2}. 

Using Theorem \ref{thm:fvr}, we avoid passing through the Schechtman--Zinn concentration results. Indeed, let $\mu_p$ denote the one-dimensional probability measure $\frac{1}{2 \Gamma(1/p + 1)} \exp(-\abs{t}^p) dt$. The $n$-fold product measure $\mu_p^n :=\mu_p^{\otimes n}$ has density $f_p^n(x)$ where:
\[
f_p^n(x) =\frac{1}{2^n \Gamma(1/p + 1)^n} \exp(- \sum_{i=1}^n \abs{x_i}^p) .
\]
By the tensorization property of the Poincar\'e inequality \cite{Ledoux-Book}, $P^N_{\mu_p^n} = P^N_{\mu_p}$, and since any one-dimensional log-concave measure satisfies the KLS conjecture, then so does any log-concave product measure. 
Now, since all level sets of $f_p^n$ are homothetic copies of $B_p^n$, it is immediate to see that $K_{\mu_p^n}$ must be (the necessarily volume one) homothetic copy $\tilde{B}_p^n$ of $B_p^n$. In the range $p \in [1,2]$, it is known (e.g. \cite{Milman-Schechtman-Book}) and easy to check that $\tilde{B}_p^n$ are finite volume-ratio bodies, meaning that $\tilde{B}_p^n \supset c \sqrt{n} B_2^n$. On the other hand, by (\ref{eq:Lin-equiv}): 
\[
\int \abs{x}^2 \lambda_{\tilde{B}_p^n} \simeq f_p^n(0)^{2/n} \int \abs{x}^2 d\mu_p^n(x) = \frac{1}{2^2 \Gamma(1/p + 1)^2} n \int_{-\infty}^\infty |t|^2 d\mu_p(t) \leq C n,
\]
uniformly in $p \in [1,2]$. Consequently, Theorem \ref{thm:fvr} implies that $\tilde{B}_p^n$ (and hence $B_p^n$) satisfy the KLS conjecture, uniformly in $n$ and $p \in [1,2]$. Similar versions may easily be obtained for convex functions more general than $|t|^p$ ; we leave this to the interested reader. 

\subsection{Another example: unit-balls of $\ell_p^n$, $p \in (2,\infty)$}

To conclude, we use the unit-balls of $\ell_p^n$ for $p \in (2,\infty)$ to further illustrate the advantage and disadvantage of the method we propose in this section. Note that $\tilde{B}_p^n$ are not finite volume-ratio bodies when $p \in (2,\infty]$, and so Theorem \ref{thm:fvr} does not directly apply. However, by inspecting its proof and avoiding using the wasteful bound (\ref{eq:wasteful}), we can still deduce the KLS conjecture for these bodies when $p$ is bounded away from $\infty$. It was first shown by R.~Latala and J.O.~Wojtaszczyk \cite{LatalaJacobInfConvolution} that in the entire range $p \in [2,\infty]$, there exists a globally Lipschitz map pushing forward $\mu_p^n$ onto $\lambda_{\tilde{B}_p^n}$, different from the radial map we have considered in this section. It is interesting to note that the radial-map is nevertheless Lipschitz on-average, at least when $p < \infty$. 

Indeed, by inspecting the proof of Theorem \ref{thm:fvr} and employing Lemma \ref{lem:op}, we see that we just need to control: 
\[
\int_{\Real^n} \norm{dT^*(x)}^2_{op} d\mu_p^n(x) = \int_{\Real^n} \frac{\abs{x}^2 |\nabla \norm{x}_{\tilde{B}_p^n}|^2 }{\norm{x}^4_{\tilde{B}_p^n}} d\mu_p^n(x) = c_{p,n}^2 \int_{\Real^n} \frac{\abs{x}^2 |\nabla \norm{x}_{p}|^2 }{\norm{x}^4_{p}} d\mu_p^n(x) ,
\]
where $\tilde{B_p^n} = c_{p,n} B_p^n$.  It is well-known and easy to calculate that $c_{p,n} \simeq n^{1/p}$. 
Using that $\abs{x}^2 \leq n^{1-2/p} \norm{x}_{p}^2$ (since $p \geq 2$), that:
\[
 |\nabla \norm{x}_p|^2 = \frac{\sum_{i=1}^n \abs{x_i}^{2p-2}}{\norm{x}_p^{2p-2}} ,
 \]
 and the invariance under permutation of coordinates, 
 we conclude that:
\[
\int_{\Real^n} \norm{dT^*(x)}^2_{op} d\mu_p^n(x) \simeq n^{2} \int \frac{\abs{x_1}^{2p-2}}{\norm{x}_{p}^{2p}} d\mu_p^n(x) .
\]
Integrating by parts, we have:
\begin{eqnarray*}
& & \int_{\Real^n}\frac{\exp(-\norm{x}_p^p)}{\norm{x}_p^{2p}} \abs{x_1}^{2p-2} dx = \int_{\Real^n} \int_{\norm{x}_p^p}^\infty \exp(-t) \brac{\frac{1}{t^2} + \frac{2}{t^3}} dt \abs{x_1}^{2p-2} dx \\
&= &  \int_0^\infty \exp(-t) \brac{\frac{1}{t^2} + \frac{2}{t^3}} \int_{t^{1/p} B_p^n} \abs{x_1}^{2p-2} dx \; dt =
 \int_0^\infty \exp(-t) \brac{\frac{1}{t^2} + \frac{2}{t^3}} t^{\frac{n+2p-2}{p}} dt \int_{B_p^n} \abs{x_1}^{2p-2} dx ,
\end{eqnarray*}
and so by a similar computation we conclude:
\[
\int \frac{\abs{x_1}^{2p-2}}{\norm{x}_{p}^{2p}} d\mu_p^n(x)  = A B ~,~ A := \frac{\int_0^\infty \exp(-t) \brac{\frac{1}{t^2} + \frac{2}{t^3}} t^{\frac{n+2p-2}{p}} dt}{\int_0^\infty \exp(-t) t^{\frac{n+2p-2}{p}} dt} ~,~ B :=\int \abs{x_1}^{2p-2} d\mu_p^n(x) .
\]
Now:
\[
B = \int_{-\infty}^\infty \abs{t}^{2p-2} d\mu_p(t) = \frac{\Gamma(-1/p)}{p \Gamma(1+1/p)} \leq \frac{C_1}{p} ,
\]
uniformly in $p \in [2,\infty]$, whereas it is elementary to verify that in that range:
\[
A \leq C_2 \min\brac{\frac{p^2}{n^2}, \frac{p}{n}} .
\]
Putting everything together, we see that:
\begin{equation} \label{eq:truth}
\int_{\Real^n} \norm{dT^*(x)}^2_{op} d\mu_p^n(x) \leq C \min(p , n) .
\end{equation}
Consequently, the same argument as in the previous subsection shows that $\tilde{B}_p^n$ verify the KLS conjecture uniformly in $n$, as long as $p$ is bounded above. 

\medskip

It is natural to wonder whether the only inequality we have used to derive the above estimate, namely $\abs{x}^2 \leq n^{1-2/p} \norm{x}_{p}^2$, was perhaps too crude. However, this is not the case, and unfortunately it is the method of working with the map $T(x) = x / \norm{x}_{K_\mu}$ which is too crude. Indeed, when $p=\infty$, so that $\mu_\infty^n$ is the uniform measure on $[-1,1]^n$ and $K = K_{\mu_\infty^n} = [-1/2,1/2]^n$, we see by Lemma \ref{lem:op} that:
\[
\norm{dT^*(x)}_{op} = \frac{\abs{x} \abs{\nabla \norm{x}_K} }{\norm{x}^2_K} = \frac{\abs{x}}{4 \norm{x}^2_\infty} ,
\]
and consequently:
\[
\int_{\Real^n} \norm{dT^*(x)}^2_{op} d\mu_p^n(x)  \simeq n ,
\]
confirming that our estimate (\ref{eq:truth}) is tight. This example suggests that perhaps it is better to work with the radial map from \cite{EMilmanSodinIsoperimetryForULC} pushing forward $\mu$ onto $\lambda_{K_\mu}$ instead of our map $T$ which pushes $\mu$ onto $\sigma_{\partial K_\mu}$.

\def\cprime{$'$} \def\textasciitilde{$\sim$}

\end{document}